\title{A ruled residue theorem for function fields of hyperelliptic curves}
\author{Parul Gupta}
\author{Sumit Chandra Mishra}
\email{parul.gupta1@snu.edu.in, parulgupta11@gmail.com}
\email{sumitcmishra@gmail.com, sumitcmishra@iiti.ac.in}
\date{\today}
\newcommand{\nat}{\mathbb{N}} 
\newcommand{\zz}{\mathbb Z}
\newcommand{\mc}[1]{\mathcal{#1}}
\newcommand{\mf}[1]{\mathfrak{#1}}
\newcommand{\mg}[1]{{#1}^{\times}}
\newcommand{\sq}[1]{{#1}^{\times 2}}
\newcommand{\ovl}{\overline}
\newcommand{\car}{\mathsf{char}}
\newcommand{\mult}{\mathrm{mult}}
\renewcommand{\deg}{\mathsf{deg}}
\renewcommand{\max}{\mathsf{max}}
\renewcommand{\min}{\mathsf{min}}
\newcommand{\mfm}{\mf m}
\renewcommand{\setminus}{\smallsetminus}
\renewcommand{\leq}{\leqslant}
\renewcommand{\geq}{\geqslant}
\newtheorem*{thm*}{Theorem}
\newtheorem{thm}{Theorem}
\numberwithin{thm}{section}
\newtheorem{prop}[thm]{Proposition}
\newtheorem{cor}[thm]{Corollary}
\newtheorem{lem}[thm]{Lemma}
\theoremstyle{definition}
\newtheorem{de}[thm]{Definition}
\newtheorem{nota}[thm]{Notation}
\newtheorem{ex}[thm]{Example}
\newtheorem{rem}[thm]{Remark}
\newtheorem{obs}[thm]{Observation}
\numberwithin{equation}{thm}
\renewenvironment{proof}{\par\noindent {\em Proof:}}{\hfill$\Box$\medskip}
\theoremstyle{plain}
\begin{document}

\begin{abstract}
We study residually transcendental extensions of a valuation $v$ on a field $E$ to function fields of hyperelliptic curves over $E$.
We show that $v$ has at most finitely many extensions to the function field of a hyperelliptic curve over $E$, for which the residue field extension is transcendental but not ruled, assuming that the residue characteristic of $v$ is either zero or greater than the degree of the hyperelliptic curve.

\medskip
\noindent
{\sc{Classification (MSC 2010):}} 12F20, 12J10, 12J20, 14H05, 16H05 

\medskip
\noindent
{\sc{Keywords:}} valuation, residue field extension, Gauss extension, rational function field, function field in one variable, hyperelliptic curve

\end{abstract}

\maketitle

\section{Introduction}\label{intro}

Let $F/E$ be a function field in one variable. 
We say that $F/E$ is \emph{rational} if $F=E(x)$ for some element $x\in F$, and we say that $F/E$ is \emph{ruled} if $F=E'(x)$ for some finite extension $E'/E$ and some element $x\in F$. We say that $F/E$ is \emph{non-ruled} if it is not ruled.  

Let $E$ be a field with a valuation $v$ on it with value group ~$vE$. 
We say that a valuation $w$ on $F$ is an \emph{  extension of $v$ to $F$} if $w|_E=v$. 
An extension $w$ of $v$ to $F$ is called \emph{residually transcendental} if the residue field extension $Fw/Ev$ is transcendental. 
In 1983, J.~Ohm \cite{Ohm} proved that if $F/E$ is ruled, then for every residually transcendental extension $w$ of $v$ to $F$, $Fw/Ev$ is also ruled. 
This is called the \emph{Ruled Residue Theorem}. In the case where $vE=\zz^n$, this result was obtained in 1967 by M.~Nagata \cite[Theorem 1]{Nag}.

Let $F/E$ be a function field in one variable, not necessarily ruled. In the case where $vE=\zz$, it was observed in \cite[Corollary 3.9]{BGVG} that there are at most finitely many residually transcendental extensions $w$ of $v$ to $F$ such that $Fw/Ev$ is non-ruled. Moreover, assuming that $Ev$ is perfect and $E$ is relatively algebraically closed in $F$, it was proved in \cite[Theorem 5.3]{BGr} that the number of such residually transcendental extensions is bounded by $\mf{g}(F/E)+1$, where $\mf{g}(F/E)$ is the genus of $F/E$.
The case of a valuation $v$ with arbitrary value group was considered for the function fields of conics, and elliptic curves in \cite{BG21} and \cite{RRTEC} respectively. In case of the function field of conics (of elliptic curves, resp.), it was shown that if  $\car{Ev}\neq 2$ (if $\car{Ev}\neq 2, 3$, resp.), then there can be at most one residually transcendental extension $w$ of $v$ to $F$ such that $Fw/Ev$ is non-ruled. The case of the function fields of curves of prime degree was studied in \cite{AD}.

It is natural to consider next the case of function fields of hyperelliptic curves, which we undertake in this article. We prove that if $F/E$ is the function field of a hyperelliptic curve of degree $n$ ($n\geq 2$), and $\car{Ev}=0$ or $\car{Ev}>n$, then there are at most finitely many residually transcendental extensions $w$ of $v$ to $F$ such that $Fw/Ev$ is non-ruled (see \Cref{main-theorem}). We emphasize that no assumptions are made on the value group $vE$.

\begin{nota}
   Let $K$ be a field and let $v$ be a valuation on $K$.
We denote by $\mc{O}_v$ the valuation ring, by $\mfm_v$ its maximal ideal, by $Kv$ the residue field $\mc{O}_v/\mfm_v$, and by $vK$ the value group of $v$.
We denote the group operation in $vK$ additively. For $a\in {\mc O_v}$ we denote the residue class $a+\mfm_v$ in $Kv$
 by $av$, or just by 
 $\ovl{a}$ if the context makes this unambiguous. For a polynomial $f(t)=a_nt^n+\cdots + a_1t+ a_0 \in \mc O_v[t]$, we use $\ovl{f}(t)$ to denote its residue polynomial  $\ovl{a_n} t^n + \cdots +\overline{a_1}t + \overline{a_0} \in Kv[t]$.

 For a ring $R$, the set of all units of $R$ is denoted by $\mg{R}$.

\end{nota}

\subsection*{Outline of the proof} Let $f(X) \in E[X]$ be a square-free polynomial. 
Consider the hyperelliptic curve $\mc H : Y^2 = f(X)$. Let $F$ be the function field of $\mc H$ over $E$, that is, $F =E(X)[\sqrt{f(X)}]$. Let  $\Omega_v(F)$ be the set of all  residually transcendental extensions of $v$ to $F$ and $\Omega^\ast_v(F)$ be the subset of $\Omega_v(F)$ containing all those valuations $w$ for which $Fw/Ev$ is non-ruled.  

Let $f = \sum_{i=0}^na_it^i\in E[t]$ and $X\in F$ be a transcendental element over $E$ (e.g. $f$ and $X$ as above). Let $\Omega\subseteq \Omega_v(F)$.
We call a valuation $w \in \Omega$ \emph{overt with respect to $(f,X)$} if $w(f(X)) = \min\{w(a_iX^i) \mid 0\leq i \leq n\}$ and we say that $w$ is \emph{covert with respect to $(f,X)$}, otherwise. The set of all overt (resp. covert) valuations with respect to $(f,X)$ in $\Omega$ is denoted by $\mathsf{Overt}_{f,X}(\Omega)$ (resp. $\mathsf{Covert}_{f,X}(\Omega)$). 
Note that $\Omega$ can be considered as a disjoint union of overt and covert extensions with respect to $(f,X)$. Our proof is based on a careful analysis of both these sets for $\Omega =\Omega^\ast_v(F)$.

In \Cref{overt}, we show that  $|\mathsf{Overt}_{f,X}(\Omega_v^\ast(F))| \leq n^2$.
We give an inductive proof for the finiteness of non-ruled covert extensions.

For a given valuation $w \in \Omega_v(F)$, we associate a subset $S_{f, X}(w)$ of $\{0,1,\ldots,n\}$ as follows:
$$S_{f, X}(w) =\{ i \mid w(a_iX^i) = \min\{ w(a_iX^i)\mid 0\leq i \leq n\}\}.$$ 
We observe that if $Fw/Ev$ is non-ruled, then $|S_{f,X}(w)| \geq 2$ (see \Cref{minimum-attained-twice}). 
 For each set $S \subseteq \{0, 1, \ldots, n\}$ and $|S| \geq 2$, we consider $$\Omega_{S,f,X} = \{w \in \Omega_v (F) \mid S =S_{f,X}(w)\}.$$ 
Then $\mathsf{Covert}_{f,X}(\Omega_{S,f,X})$ is the set  of all covert extensions with respect to $(f,X)$ such that $S_{f,X} (w) =S$. 
As there are only finitely many subsets of $\{0, 1, \ldots, n\}$, it is enough to show that each of the sets $\mathsf{Covert}_{f,X}(\Omega_{S,f,X}) \cap \Omega_v^\ast(F)$ is finite.

 Let  $0 \leq i_0 < i_1 < \cdots < i_k\leq n$ and  $S = \{ i_0, i_1, \ldots, i_k\}$. Assume that $k\geq 1$, i.e. $|S|\geq 2$.
We show that there is an integer $e_S \in \nat$ and an element $c_S \in \mg E$, 
such that $w(c_SX^{e_S}) =0$ for all $w\in \Omega_{S,f,X}$ (see the paragraph after \Cref{cS}).
 We emphasize that $e_S$ and $c_S$  depend only on $S$ and $f$, and do not depend on $w$. Note that such an element $c_S$ can be replaced by any element in $c_S\mg{\mc O}_v$.
For $S,f$ and $c_S$, we associate a polynomial over the residue field $Ev$ as follows:
$$p_{S,f,c_S}(t) = \sum_{j=0}^{k} \ovl{a_{i_j} a_{i_0}^{-1}c_S^{-n_j}} t^{n_j},~\mbox{ where } n_j = \frac{i_j-i_0}{e_S}.$$ 
We show that
$p_{S,f,c_S}(\ovl{c_SX^{e_S}}) =0$ in $Fw$ for all $w \in \mathsf{Covert}_{f,X}(\Omega_{S,f,X})$, see \Cref{residue-root}. 
Let $\mathsf{Mult}({S,f,X} )$ be the set of multiplicities of all the roots of $p_{S,f,c_S}$. We note that $\mathsf{Mult}({S,f,X} )$ does not depend on the choice of $c_S$. All these notions are defined and explained in \Cref{4}.

Our proof is based on the induction on $\max~ \mathsf{Mult}({S,f,X} )$. 
In \Cref{5}, we develop a multiplicities reduction technique.
Starting with a given pair $(f,X)$ and a subset $S$, 
for each monic irreducible factor $q$ of $p_{S,f,c_S}$, we construct a base extension $E'$  with a valuation extension $v'$ of $v$ to $E'$ 
such that $\frac{1}{e_S}v(c_S) \in v'E'$. 
The construction of $E'$ depends only on the polynomial $f,S$ and $q$. 
On the function field $F' =FE'$, we find a set of valuations $\Omega$ such that for each  $w \in \mathsf{Covert}_{f,X}(\Omega_{S,f,X})$ with $q(\ovl{c_SX^{e_S}}) =0$ in $Fw$, there is a unique extension $w'$ of $w$ to $F'$ in $\Omega$ 
satisfying a couple of properties, but most importantly $F'w' =Fw$.
Further, we find a polynomial $g\in E'[t]$ and an element $X_1 \in F'$ such that $F' =E'(X_1)[\sqrt{g(X_1)}]$ and 
for each $w\in \Omega$,  $S_1:=S_{g,X_1}(w) $ 
where $\max \;\mathsf{Mult}(S_1,g,X_1) < \mathsf{Mult}(S,f,X)$. 
We prove the base case for the induction is proven in \Cref{base-case}, that is, if $\max \;\mathsf{Mult}(S,f,X) =1$ then $\Omega_{S,f,X}\cap \Omega^{\ast}_v(F)$ is finite.

\section{Base field extensions}\label{2}

Let $K$ be a field and $v$ be a valuation on $K$. Let $L/K$ be a field extension.
An \emph{extension of $v$ to $L$} is a valuation $w$ on $L$ such that $vK$ is an ordered subgroup of $wL$ and $w|_K=v$. By Chevalley's Theorem \cite[Section 3.1]{EP}, $v$ always extends to a valuation on $L$. 
Two valuations $w$ and $w'$ on $L$ are \emph{distinct} if $\mc O_w \neq \mc O_{w'}$.

In this section, we begin by establishing some results concerning the extension of valuations to finite simple extensions. These results will then be applied to valuations on regular function fields $F/E$ of transcendence degree one. Specifically, we consider residually transcendental extensions $w$ of the base valuation $v$ from $E$ to $F$. We construct suitable extensions 
$E'/E$ of the base field such that there exists a valuation extension $w'$ of $
w$ to $FE'$ satisfying  $(FE')w' = Fw$. In this context, if the residue field extension 
$Fw/Ev$ is non-ruled, then the same holds for 
$(FE')w'/Ev$.

We begin by recalling the Fundamental Inequality.

\begin{thm}[Fundamental Inequality]
\label{funda_ineq}
Let $K$ be a field and $v$ be a valuation on $K$.
Assume that $L/K$ is a finite field extension. 
Let $r\in\nat$ and let $w_1,\dots, w_r$ be distinct extensions of $v$ to $L$.
Then
$$ \displaystyle \sum^{r}_{i=1} [w_iL:vK] \cdot [Lw_i:Kv] \leq [L:K].$$
\end{thm}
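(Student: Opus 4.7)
My plan is to exhibit $\sum_{i=1}^{r} e_i f_i$ many $K$-linearly independent elements of $L$, where $e_i := [w_iL:vK]$ and $f_i := [Lw_i:Kv]$. I may assume each is finite, else I replace it by any finite lower bound and prove that. For each $i$, I will choose $a_{i,1},\ldots,a_{i,e_i} \in \mg{L}$ whose $w_i$-values represent $e_i$ distinct cosets of $vK$ in $w_iL$, together with $b_{i,1},\ldots,b_{i,f_i} \in \mc{O}_{w_i}$ whose $w_i$-residues are $Kv$-linearly independent in $Lw_i$. When $r=1$, a direct calculation already shows that the products $a_{1,j}b_{1,k}$ are $K$-linearly independent; but for $r\geq 2$ the $b_{i,k}$ may behave badly with respect to $w_{i'}$ for $i'\neq i$, so they must first be modified.

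To decouple the valuations, I will use the fact that distinct extensions of the same valuation on $K$ to $L$ are automatically pairwise independent as valuations on $L$, so the Approximation Theorem applies to $\{w_1,\ldots,w_r\}$. For a sufficiently large threshold $\gamma$ (to be specified), it furnishes elements $c_{i,k}\in L$ agreeing with $b_{i,k}$ at $w_i$ (so $c_{i,k}$ is a $w_i$-unit with the same nonzero residue as $b_{i,k}$) while $w_j(c_{i,k}) > \gamma$ for every $j\neq i$. The claim is then that the $\sum_i e_i f_i$ products $a_{i,j}c_{i,k}$ are $K$-linearly independent, which gives the desired inequality at once.

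To verify the claim, I suppose a nontrivial relation $\sum_{i,j,k}\lambda_{i,j,k}a_{i,j}c_{i,k}=0$ with $\lambda_{i,j,k}\in K$, and after scaling by an element of $K$ I normalize so that $\min_{i,j,k}v(\lambda_{i,j,k}) = 0$. Setting $u_{i,j} := \sum_k \lambda_{i,j,k}c_{i,k}$, the $Kv$-linear independence of the residues $\ovl{c_{i,k}}^{w_i}$ forces $w_i(u_{i,j}) = \min_k v(\lambda_{i,j,k}) \in vK$ whenever $u_{i,j}\neq 0$. Picking $(i^*,j^*,k^*)$ with $v(\lambda_{i^*,j^*,k^*})=0$ so that $w_{i^*}(u_{i^*,j^*})=0$, and evaluating $w_{i^*}$ on the relation, the $i^*$-block summands $u_{i^*,j}a_{i^*,j}$ have $w_{i^*}$-values in pairwise distinct cosets of $vK$ in $w_{i^*}L$ (since the $w_{i^*}(a_{i^*,j})$ do), while each cross term with $i'\neq i^*$ has $w_{i^*}$-value at least $\gamma + w_{i^*}(a_{i',j'})$. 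The main obstacle is to fix $\gamma$ a priori, \emph{before} the relation is known, so that cross terms always strictly exceed the $i^*$-block minimum in $w_{i^*}$-value; this is possible because, after normalization, the $i^*$-block minimum is bounded above by $\max_j w_{i^*}(a_{i^*,j})$ independently of the $\lambda$'s, so it suffices to take $\gamma$ larger than the finitely many differences $w_{i^*}(a_{i^*,j}) - w_{i^*}(a_{i',j'})$ in the totally ordered value groups. With $\gamma$ so chosen, the overall $w_{i^*}$-minimum of the left-hand side is attained uniquely, contradicting the sum being zero, and we conclude $\sum_{i=1}^r e_if_i \leq [L:K]$.
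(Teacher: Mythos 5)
The paper does not prove this statement; it simply cites \cite[Theorem 3.3.4]{EP}, so your attempt cannot be compared against a proof in the paper. Your overall strategy (exhibit $\sum e_if_i$ many $K$-linearly independent products $a_{i,j}c_{i,k}$ after decoupling the valuations via approximation, then run a Gauss-lemma/coset argument on a hypothetical relation) is the right shape, and the linear-independence verification at the end is sound once the $c_{i,k}$ exist with the required properties.

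However, there is a genuine gap at the step that furnishes the $c_{i,k}$: the assertion that distinct extensions of $v$ to a finite extension $L/K$ are \emph{automatically pairwise independent} is false. They are always pairwise \emph{incomparable} (neither valuation ring contains the other — this does use finiteness of $L/K$), but incomparability does not imply independence once $v$ has rank $\geq 2$. For a concrete counterexample, take $K=\mathbb{Q}(t)$ with $v$ the rank-two valuation obtained by composing the $t$-adic place (residue field $\mathbb{Q}$) with the $2$-adic valuation on $\mathbb{Q}$, and let $L=\mathbb{Q}(\sqrt{17})(t)$. The $t$-adic place extends uniquely to a place $u$ on $L$ with residue field $\mathbb{Q}(\sqrt{17})$, and since $17\equiv 1\pmod 8$ the prime $2$ splits in $\mathbb{Q}(\sqrt{17})$, giving two distinct refinements $w_1,w_2$ of $u$ extending $v$. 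Both $w_1$ and $w_2$ have the nontrivial valuation $u$ as a common coarsening, so they are dependent, and the strong Approximation Theorem in the form you invoke (which requires pairwise independence to control $w_j$-values above an \emph{arbitrary} threshold $\gamma$) does not apply. What is available without independence is only weak approximation modulo the maximal ideals (so one can get $w_j(c_{i,k})>0$, not $w_j(c_{i,k})>\gamma$), which is not enough for your cross-term estimate. The argument can likely be repaired by invoking the refined approximation theorem of Ribenboim for pairwise incomparable (not necessarily independent) valuations, but that version carries compatibility conditions along common coarsenings that you would then need to verify; as written, the justification rests on a false premise.
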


\begin{proof}
See \cite[Theorem 3.3.4]{EP}.
\end{proof}

A polynomial $f(t) = a_nt^n+\cdots + a_1t+ a_0 \in \mc O_v[t]$ is called \emph{primitive} if $\ovl{f}(t)\neq 0 
$, that is,  there exists at least one $i \in \{0, \dots ,n\}$ such that $a_i$ is a unit in $\mc O_v$. 
Note that for a polynomial $f(t) =a_nt^n+\cdots + a_1t+ a_0 \in K[t]$ and  for $0\leq i_0 \leq n$ such that $v(a_{i_0}) = \min \{v(a_i)\mid 0 \leq i\leq n\}$, the polynomial $a_{i_0}^{-1} f(t) \in \mc O_v[t]$ is primitive.


\begin{lem}\label{simple_ext}
 Let $f\in\mc{O}_v[t]$ such that $\ovl{f}\in Kv[t]\setminus Kv$. 
 Let $q \neq t$ be an irreducible factor of $\ovl f$ in $Kv[t]$.
Then there is an irreducible factor $g \in\mc{O}_v[t]$ of $f$ such that $q$ is an irreducible factor of $\ovl{g}$. Let $L=K[t]/(g)$ and $\alpha \in L$ be a root of $g$. Then 
\begin{enumerate}[$(1)$]
    \item there is an extension $v'$ of $v$ to $L$ such that $v'(\alpha) =0$ and $q(\ovl{\alpha}) =0$ in $Lv'$. Moreover, if $\ovl{\alpha}$ is a simple root of $\ovl f$, then $[Lv':Kv] =\deg(q)$, $Lv'=Kv[\ovl{\alpha}]$ and $v'L = vK$.
    \item  if $\ovl{g}$ is separable and $\deg(\ovl{g}) = \deg(g)$, then the extension $v'$ in $(1)$ is unique.
    \item if $\ovl{g}$ is linear, then the extension $v'$ in $(1)$ is unique.
    
\end{enumerate}
\end{lem}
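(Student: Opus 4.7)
The plan is to proceed in three steps: first produce the factor $g$; second, establish existence of $v'$ in part~(1) by a Chevalley-type domination argument; third, read off the residue-field assertions of the ``moreover'' clause of~(1) and the uniqueness statements in~(2) and~(3) from Hensel's lemma and the Newton polygon of $g$.

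For the factor $g$, I would factor $f$ in $K[t]$ into irreducibles and rescale each factor by a suitable element of $\mg{K}$ to make it primitive; the Gauss-type lemma for valuation rings (products of primitive polynomials are primitive) then forces a factorisation $f = u \cdot g_1 \cdots g_r$ in $\mc O_v[t]$ with $u \in \mg{\mc O_v}$ and each $g_i$ primitive and irreducible in $\mc O_v[t]$. Reducing gives $\ovl f = \ovl u \prod \ovl{g_i}$, so $q$ divides some $\ovl{g_i}$ and I take $g = g_i$. For existence in~(1), let $Q \in \mc O_v[t]$ be a monic lift of $q$ and put $A = \mc O_v[\alpha] \cong \mc O_v[t]/(g)$; since $q \mid \ovl g$, the composition $\mc O_v[t] \twoheadrightarrow Kv[t]/(q)$ descends to a surjection $\psi : A \twoheadrightarrow Kv[t]/(q)$ with kernel $\mf{M} = (\mfm_v, Q(\alpha))$, a maximal ideal of $A$. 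By Chevalley's theorem \cite[Section~3.1]{EP}, the localisation $A_{\mf{M}}$ is dominated by some valuation ring $\mc O_{v'}$ of $L$. Since $\mfm_v \subseteq \mf{M} \subseteq \mfm_{v'} \cap A$ and any valuation overring of $\mc O_v$ having the same maximal ideal agrees with $\mc O_v$, one gets $\mc O_{v'} \cap K = \mc O_v$, so $v'$ extends $v$. Combining $\alpha \in \mc O_{v'}$ and $Q(\alpha) \in \mfm_{v'}$ with $q(0) \neq 0$ (which follows from $q \neq t$) yields $v'(\alpha) = 0$, $q(\ovl\alpha) = 0$ in $Lv'$, and $Kv[\ovl\alpha] \subseteq Lv'$.

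For the ``moreover'' clause, assume $\ovl\alpha$ is a simple root of $\ovl f$, and hence of $\ovl g$. Passing to a henselisation $K^h$ of $(K,v)$ and the unique extension $v^h$ of $v'$ to $L^h = K^h(\alpha)$, let $m \in \mc O_{K^h}[t]$ be the monic minimal polynomial of $\alpha$ over $K^h$; this is well-defined in $\mc O_{K^h}[t]$ because $v^h(\alpha) \geq 0$ forces every $K^h$-conjugate of $\alpha$ to be integral over $\mc O_{K^h}$. Ostrowski's theorem then gives $\ovl m = q^{e(v^h/v)}$ up to a unit scalar, together with $\deg m = e(v^h/v) f(v^h/v)$. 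Since $m$ divides $g$ in $K^h[t]$, the multiplicity of $\ovl\alpha$ in $\ovl m$ cannot exceed its multiplicity in $\ovl g$, which is $1$ by hypothesis; hence $e(v^h/v) = 1$, giving $\ovl m = q$ and $f(v^h/v) = \deg q$, and therefore $v'L = vK$ and $Lv' = Kv[\ovl\alpha]$.

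For~(2), rescaling $g$ to be monic (which is possible since $\deg \ovl g = \deg g$) and applying Hensel's lemma in $K^h$ to the coprime separable factorisation $\ovl g = \prod q_i$ yields $g = \prod G_i$ in $\mc O_{K^h}[t]$ with $\ovl{G_i} = q_i$. Extensions of $v$ to $L$ correspond bijectively to these factors, and the condition $q(\ovl\alpha) = 0$ singles out the unique factor with $\ovl{G_i} = q$. For~(3), writing $g = \sum_{i=0}^n a_i t^i$, the hypothesis $\deg \ovl g = 1$ translates to $v(a_1) = 0$ and $v(a_i) > 0$ for $i \geq 2$, so the Newton polygon of $g$ has a single segment of non-positive slope, of horizontal length one; hence $g$ admits exactly one root in $\ovl{K}$ of non-negative valuation under any extension of $v$ to $\ovl{K}$, which forces uniqueness of $v'$ with $v'(\alpha) \geq 0$. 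The main delicate point I anticipate is the Ostrowski-type identification $\ovl m = q^{e(v^h/v)}$ when $g$ itself is not monic; this is handled cleanly by working with the monic minimal polynomial $m$ of $\alpha$ over $K^h$ rather than with $g$ directly.
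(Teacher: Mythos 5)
Your proposal is correct, and in parts (2) and (3) it takes a genuinely different route from the paper. The initial factorization of $f$ into rescaled primitive irreducibles is the same; for existence in (1), the paper cites \cite[Lemma~2.3]{RRTEC} where you spell out a self-contained Chevalley domination argument via the local ring $A_{\mf{M}}$, and for the ``moreover'' clause the paper states the conclusion without further argument while you supply a proof by passing to a henselization. For (2), the paper counts extensions using the Fundamental Inequality to force $n_i=1$, whereas you factor $g$ over $K^h$ by Hensel's lemma and use the bijection between extensions of $v$ to $L$ and irreducible factors of $g$ over $K^h$; the paper's counting is more elementary, your correspondence more structural. For (3), the paper passes to a normal closure $N$, sorts the conjugate roots of $g$ by valuation, and applies the Conjugation Theorem, while you read the same root-count off the Newton polygon of $g$; both work and are essentially dual views of the same computation (you should note explicitly that $q\neq t$ forces $v(a_0)=0$, which is what pins the slope-zero segment at the origin).

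One small imprecision in your ``moreover'' step: the formulas $\ovl m = q^{\,e(v^h/v)}$ and $\deg m = e(v^h/v)\,f(v^h/v)$ omit the defect. The correct statement is $\ovl m = q^{k}$ with $k = e(v^h/v)\,p^{\delta}$ and $\deg m = e\,f\,p^{\delta}$, where $p^{\delta}$ is the defect of $L^h/K^h$. Your conclusion survives, since $m$ is a monic factor of $g$ in $\mc{O}_{v^h}[t]$, so simplicity of $\ovl{\alpha}$ in $\ovl{g}$ forces $k=1$, hence $e=1$ and $p^{\delta}=1$, and then $\deg m = \deg q$ together with the Fundamental Inequality for $L^h/K^h$ gives $v'L = vK$ and $Lv' = Kv[\ovl{\alpha}]$. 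But the corrected formula is worth recording: in positive residue characteristic it is precisely the simple-root hypothesis, not the mere finiteness of $L/K$, that rules out a defect.
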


\begin{proof}
Let $g_1(t), \ldots, g_r(t) \in K[t]$ be all the monic irreducible factors of $f(t)$, not necessarily distinct. 
 For $1\leq i\leq r$, let $b_i \in \mg E$ be such that $b_ig_i(t) \in \mc O_v[t]$ is primitive. 
Let $a$ be the leading coefficient of $f$. We have $$f(t) = a(b_1\ldots b_r)^{-1}\prod_{i=1}^r b_ig_i(t) $$
and 
$$\ovl{f}(t) = \ovl{ a(b_1\ldots b_r)^{-1}}\prod_{i=1}^r \ovl{b_ig_i}(t).$$
Since $q$ divides $\ovl{f}$, we have that $q$ divides $\ovl{b_ig_i} \in \mc O_v[t]$ for some $1\leq i \leq r$.

\medskip

$(1)$ By \cite[Lemma 2.3]{RRTEC}, there is an extension $v'$ of $v$ to $L$ such that $ v'(\alpha) = 0 $ and $q(\ovl{\alpha}) =0$ in $Lv'$. 
Thus $Ev[\ovl{\alpha}] \subseteq Lv'$.
Furthermore, given an extension $v'$ of $v$ to $L$ with $v'(\alpha) = 0$, if $\ovl{\alpha}$ in $Lv'$ is a simple root $\ovl{f}$, then $v'L=vK$ and $Lv' =Kv[\ovl{\alpha}]$. Thus $[Lv': Kv] = \deg(q)$.

$(2)$ Suppose further that $\ovl{g}$ is separable and $\deg(\ovl{g}) = \deg(g)$. Write $\ovl{g} = q_1\cdots q_r$, where $q_1, \ldots, q_r \in Kv[t]$ are distinct irreducible with $q_1=q$. 
Let $1\leq i\leq r$. By $(1)$,  there is an extension $v_i$ of $v$ to $L$ such that $v_i(\alpha) =0$, $q_i(\ovl{\alpha}) =0$ in $Lv_i$ and $[Lv_i: Kv] = \deg(q_i)$. 
For an extension $\tilde v$ of $v$ to $L$ with $\tilde v(\alpha) =0$ and $q_i(\ovl{\alpha}) =0$ in $L\tilde v$, we have that $L\tilde v$ contains a root of $q_i$, and hence $[L\tilde v:Kv] \geq \deg(q_i)$.
Let $n_i$ be the number of valuation extensions $\tilde v$ of $v$ to $L$ with $
\tilde v(\alpha) =0$ and $q_i(\ovl{\alpha}) =0$ in $L\tilde v$. 
We have $$ \sum_{i=1}^r n_i\deg(q_i)  \leq  \sum_{v'|_K=v} [Lv': Kv] \leq [L: K] = \deg(g) = \deg{(\ovl{g})} = \sum_{i=1}^r \deg(q_i),$$ where the second inequality is implied by the Fundamental inequality.
From the above equation we get $n_i =1$ for all $1\leq i\leq r$. Thus we get that there is a unique extension $v'$ of $v$ to $L$ such that $v'(\alpha) =0$ and $q(\ovl{\alpha}) =0$ in $Lv'$.

$(3)$ Suppose further that $\ovl{g}(t)$ is linear. Let $N$ be a normal closure of $L$ and let $v''$ be an extension of $v'$ from $L$ to $N$. Write
$$g(t) = c \prod_{i=1}^d (t-\alpha_i)$$
where  $\alpha_1, \ldots, \alpha_d \in N$ with and $c\in K$ is the leading coefficient of $g(t)$. We further assume that
$v''(\alpha_1) \geq  v''(\alpha_2)\geq \cdots \geq v''(\alpha_d)$. 
If $v(\alpha_d)\geq 0$ then let $d'=d$; otherwise,  let  $d'$ be the smallest natural number such that $v''(\alpha_i) <0$ for all $i > d'$.
Then 
$$\ovl{g}(t) = \ovl{ c(\alpha_{d'+1}\ldots \alpha_d)}\prod_{i=1}^{d'} (t -\ovl{\alpha_i})(-1)^{d-d'} .$$

Since  $\ovl{g}(t) \in Nv''[t]$ is linear and $v''(\alpha) =0$, we obtain that $d' =1$, $\alpha_1 =\alpha$ and $v(\alpha_i) <0$ for all $i >1$. 

Let $\tilde v$ be an extension of $v$ to $N$. Then $\tilde v = v''\circ \sigma$, where $\sigma$ is a $K$-automorphism of $N$ \cite[Conjugation Theorem 3.2.15]{EP}. Since $L =K[\alpha]$, we have that if $\sigma(\alpha) =\alpha$ then $\sigma|_L$ is identity and hence $\tilde v|_L = v'$. Suppose now that $\sigma(\alpha) \neq \alpha$.
Then $\tilde v(\alpha) = v''\circ \sigma (\alpha) = v''(\sigma(\alpha)) <0$. Thus $\tilde v|_L\neq v'$. Thus in this case $v$ extends to a unique extension $v'$ to $L$ such that $v'(\alpha) =0$ and $\ovl{g}(\ovl{\alpha}) =0$ in $Lv'$.
\end{proof}

For the rest of the section, we fix a field $E$ with a valuation $v$ on $E$. Let $F/E$ be a regular function field in one variable, i.e. $E$ is relatively algebraically closed in $F$.
Let $w$ be a residually transcendental extension of $v$ to $F$. 

We note a well-known statement below.

\begin{rem}\label{regularfunctionfields}
Let $Q$ be a monic irreducible polynomial over $E$. Since all roots of $Q$ are algebraic over $E$, all the coefficients of  an irreducible factor of $Q$ in $F[t]$ are also algebraic over $E$ and hence belong to $E$. Thus $Q$ remains irreducible over $F$. Therefore, $F[t]/(Q(t))$ is a field extension of $F$. 
\end{rem}

\begin{nota}
$(1)$ For an irreducible polynomial $Q \in E[t]$, we set $F_Q = F[t]/(Q(t))$ and call it the root field of $Q$ over $F$. For $\alpha \in F_Q$ a root of $Q(t)$, we set $E_Q = E[\alpha]$. Then $E_Q/E$ is a finite extension and $F_Q =FE_Q$.
\\
$(2)$ Let $e \in \nat$ and $a\in \mg E$ be such that $t^e-a$ is irreducible in $E[t]$, then the root field of $t^e-a$ is written as $F[\sqrt[e]{a}]$. 
\end{nota}

\begin{lem}\label{baseextension}
    Let $\beta \neq 0 \in Fw$ be algebraic over $Ev$ and  $q(t)\in Ev[t]$ be the minimal polynomial of $\beta$ over $Ev.$ Let $Q(t)\in \mc{O}_v[t]$ be monic such that $\ovl{Q}(t)=q(t).$ Assume that $\car Ev $ does not divide $\deg(q)$.
    Then $Q(t)$ is irreducible over $F$.
Let $F_Q = F[t]/(Q(t))$, $\alpha =t+(Q(t)) \in F_Q$ and $E_Q=E[\alpha]$. 
Then $v$ extends uniquely to a valuation $v_Q$ on $E_Q$ and $v_Q(\alpha) =0$. Furthermore, there is a unique extension $w_Q$ of $w$ to $F_Q$ with $\ovl{\alpha} = \beta$ in $F_Q w_Q$. Furthermore, for this extension $w_Q$, we have $F_Q w_Q=Fw$ and $w_QF_Q =wF$.

\end{lem}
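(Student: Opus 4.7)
The plan is to verify the claims in order: irreducibility of $Q$ over $F$; existence and uniqueness of $v_Q$ on $E_Q$; and existence and uniqueness of $w_Q$ on $F_Q$ with $\ovl{\alpha}=\beta$, together with the equalities $F_Qw_Q=Fw$ and $w_QF_Q=wF$. The common engine will be the Fundamental Inequality (\Cref{funda_ineq}) combined with \Cref{simple_ext}, driven by the observation that $q$ is separable: if $\car(Ev)=p>0$, any inseparable $q$ would lie in $Ev[t^p]$, forcing $p\mid\deg q$ against the hypothesis.

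First I would check that $Q$ is irreducible in $E[t]$. Any monic factorization $Q=Q_1Q_2$ in $E[t]$ with $\deg Q_i\geq 1$ automatically lies in $\mc{O}_v[t]$, since the $Q_i$ are monic with roots integral over $\mc{O}_v$ and $\mc{O}_v$ is integrally closed in $E$. Reducing modulo $\mfm_v$ yields a nontrivial factorization $q=\ovl{Q_1}\,\ovl{Q_2}$, contradicting irreducibility of $q$. By \Cref{regularfunctionfields} this upgrades to irreducibility of $Q$ over $F$, so $F_Q$ is a genuine field extension of $F$ of degree $\deg q$.

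For $v_Q$, I would apply \Cref{simple_ext}(1) with $K=E$, $f=g=Q$ and the factor $q$ of $\ovl{Q}$ to obtain an extension with $v_Q(\alpha)=0$; separability of $q$ makes $\ovl{\alpha}$ a simple root, giving $E_Qv_Q=Ev[\ovl{\alpha}]$ and $v_QE_Q=vE$. For uniqueness, any extension $v'$ of $v$ to $E_Q$ satisfies $v'(\alpha)\geq 0$ because $\alpha$ is integral over $\mc{O}_v$, while $q(0)\neq 0$ (as $\beta\neq 0$ is a root of the irreducible $q$) forces $\ovl{\alpha}\neq 0$ and hence $v'(\alpha)=0$. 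Therefore $[E_Qv':Ev]\geq\deg q$, and the Fundamental Inequality collapses:
\[ \deg q=[E_Q:E]\geq\sum_{v'}[v'E_Q:vE]\,[E_Qv':Ev]\geq r\deg q, \]
where $r$ is the number of extensions, forcing $r=1$.

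For $w_Q$, the same residue argument applied with $w$ in place of $v$ shows every extension $w'$ of $w$ to $F_Q$ has $w'(\alpha)=0$ and $\ovl{\alpha}_{w'}$ a root of $q$ in $F_Qw'$. Separability of $q$ together with $\beta\in Fw$ gives a factorization $q=(t-\beta)\prod_{j\geq 2}p_j$ in $Fw[t]$ into distinct monic irreducible factors. For each $p_j$, \Cref{simple_ext}(1) with $K=F$, $v=w$, $f=Q$ produces an extension $w'$ of $w$ to $F_Q$ with $p_j(\ovl{\alpha}_{w'})=0$; since the $p_j$ are pairwise coprime, each $w'$ corresponds to a unique $p_j$ and satisfies $[F_Qw':Fw]\geq\deg p_j$. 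Letting $n_j$ denote the number of extensions hitting $p_j$, the Fundamental Inequality gives
\[ \deg q=[F_Q:F]\geq\sum_{w'}[w'F_Q:wF]\,[F_Qw':Fw]\geq\sum_j n_j\deg p_j\geq\deg q, \]
forcing every inequality to be an equality. Thus $n_j=1$ for each $j$, $[w'F_Q:wF]=1$, and $[F_Qw':Fw]=\deg p_j$. Specializing to the linear factor $t-\beta$ yields the unique $w_Q$ with $\ovl{\alpha}=\beta$, and because that factor has degree one, we read off $F_Qw_Q=Fw$ and $w_QF_Q=wF$. The main subtlety is isolating uniqueness with the prescribed residue $\beta$ rather than mere existence: this is precisely where separability of $q$ (so that $(t-\beta)$ appears exactly once among the factors of $q$ in $Fw[t]$) meets the equality case of the Fundamental Inequality.
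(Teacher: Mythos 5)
Your proof is correct and follows essentially the same route as the paper's: irreducibility of $Q$ by reducing a monic factorization (via integral closedness of $\mc O_v$) and lifting to $F$ via \Cref{regularfunctionfields}, uniqueness of $v_Q$ and $w_Q$ via the Fundamental Inequality, and existence plus the residue/value-group identities via \Cref{simple_ext}(1). The paper's proof of $v_Q(\alpha)=0$ uses a short case analysis on the sign of $v_Q(\alpha)$ exploiting $v(b_0)=0$, whereas you observe that monicity of $Q$ gives $v'(\alpha)\geq 0$ and $q(0)\neq 0$ rules out $v'(\alpha)>0$, a slightly cleaner variant; and for uniqueness of $w_Q$ you inline the argument behind \Cref{simple_ext}(2) rather than citing it. These are presentational differences, not a different method.
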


\begin{proof}
Let $q(t) =t^m +\beta_{m-1}t^{m-1} + \ldots + \beta_0$ where $\beta_i\in Ev$ for $0\leq i \leq m-1$. Then $Q(t)=t^m+ b_{m-1}t^{m-1} + \ldots +b_0$ where $b_i\in \mc O_v$ and $\ovl{b_i}=\beta_i$ for $0\leq i\leq m-1.$ 
Since $\ovl{Q}(t) =q(t)$ and $q(t)$ is irreducible in $Ev[t]$, we get that $Q(t)$ is irreducible in $E[t]$. By \Cref{regularfunctionfields}, we get that $Q(t)$ is irreducible in $F[t]$.
Thus $[F_Q: F] = [E_Q:E]= m =[Ev[\beta]: Ev]$. 
Using the fundamental inequality, we get that $v$ extends uniquely to $E_Q$. 
Let $v_Q$ be the unique extension of $v$ to $E_Q$. Then $v_Q(\alpha^m + b_{m-1}\alpha^{m-1}+ \ldots + b_1\alpha + b_0) = v(Q(\alpha)) > 0$. 
Since $v(b_0) =0$, we must have $v_Q(\alpha^m + b_{m-1}\alpha^{m-1}+ \ldots + b_1\alpha)=0$. Note that if $v_Q(\alpha)>0$ then $v_Q(\alpha^m + b_{m-1}\alpha^{m-1}+ \ldots + b_1\alpha)\geq v_Q(\alpha)>0$, which is a contradiction. Similarly, if $v_Q(\alpha)<0$ then $v_Q(\alpha^m + b_{m-1}\alpha^{m-1}+ \ldots + b_1\alpha)= m v_Q(\alpha)<0$, which is a contradiction. Thus we conclude that $v_Q(\alpha)=0$.

We have $F_Q =F[\alpha]$.  
Since $\car Ev $ does not divide $m$, $\ovl{Q}(t) = q(t)$ is separable.
Since $q(t)$ is the minimal polynomial of $\beta$ over $Ev$, $t-\beta $ is an irreducible factor of $q(t)$ in $Fw[t]$.
By \Cref{simple_ext}$(1)$, we get that there exists an extension $w_Q$ of $w$ to $F_Q$ such that $w_Q(\alpha) =0$, $\ovl{\alpha}=\beta$ in $F_Q$, $w_QF_Q=wF$, and $F_Qw_Q = Fw[\ovl{\alpha}] = Fw[\beta]=Fw$ as $\beta \in Fw$.

Furthermore, since $\deg(Q(t)) = \deg(q(t))$,  by \Cref{simple_ext}$(2)$, such a valuation $w_Q$ is unique.
\end{proof}


 \begin{lem}\label{baseextension3} 
Let $X \in F \setminus E$ and let $e$ be the order of $w(X)$ in $wF/vE$. Let $c \in \mg E$ be such that $w(cX^e) =0$.
Assume that $\car{Ev}$ does not divide $e$ and $\ovl{cX^e} \in Ev$. Let $\alpha \in \mg{\mc O}_v$ be such that $\ovl{\alpha} =  \ovl{cX^e}$. 
Then  $p(t)= t^e - \alpha^{-1}cX^e  \in \mc O_w[t]$ is primitive irreducible polynomial with the residue polynomial $t^e-1$ in $Fw[t]$.
Let $F' = F [\sqrt[e]{\alpha^{-1}c}]$ and let  $\lambda \in F'$ be such that $\lambda^e =\alpha^{-1}c$.
Then there is a unique extension $w'$ of $w$ to $F'$ such that $w' (\lambda X) =0$, $ \ovl{\lambda X} =1$ in $F'w'$, and for the valuation $w'$, $F'w'=Fw$.
\end{lem}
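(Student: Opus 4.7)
The plan is to apply Lemma \ref{simple_ext} to the polynomial $p(t) = t^e - \alpha^{-1} c X^e$ viewed over $F$. The three preliminary tasks are: verifying that $p$ is primitive in $\mc O_w[t]$ with residue $t^e - 1$; verifying that the notation $F' = F[\sqrt[e]{\alpha^{-1}c}]$ is legitimate, i.e.\ that $t^e - \alpha^{-1}c$ is irreducible over $E$; and deducing that $p$ itself is irreducible over $F$.

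The residue calculation is immediate. Since $\alpha \in \mg{\mc O_v}$ and $w(cX^e) = 0$, the constant coefficient of $p$ is a unit in $\mc O_w$, so $p$ is primitive; the defining property $\ovl{\alpha} = \ovl{cX^e}$ gives $\ovl{p}(t) = t^e - 1$. Because $\car(Fw) = \car(Ev)$ does not divide $e$, the polynomial $t^e - 1$ is separable in $Fw[t]$, and $q(t) := t - 1$ is a simple linear factor distinct from $t$.

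For the irreducibility of $t^e - \alpha^{-1}c$ over $E$, I would argue by contradiction via the valuation. If $t^e - \alpha^{-1}c = g(t) h(t)$ in $E[t]$ with $1 \leq \deg g = k < e$, then, extending $v$ to a splitting field by some $\tilde v$, every root $\mu$ of $t^e - \alpha^{-1}c$ satisfies $e \tilde v(\mu) = v(\alpha^{-1}c) = -ew(X)$, hence $\tilde v(\mu) = -w(X)$. The constant term of $g$, being the product of $k$ such roots up to sign, has $\tilde v$-value $-kw(X)$; as it lies in $E$, this value must belong to $vE$, contradicting the hypothesis that $w(X)$ has order exactly $e$ in $wF/vE$. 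By Remark \ref{regularfunctionfields}, $t^e - \alpha^{-1}c$ remains irreducible over $F$, so $[F':F] = e$. Since $X \in \mg F$, one has $F[\lambda X] = F[\lambda] = F'$, so the minimal polynomial of the root $\lambda X$ of $p$ over $F$ has degree $e = \deg p$, forcing $p$ itself to be irreducible over $F$.

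With these ingredients in hand, Lemma \ref{simple_ext} applied to the primitive irreducible polynomial $p \in \mc O_w[t]$ and the factor $q(t) = t - 1$ of $\ovl{p}$ yields the conclusion. Part (1) produces an extension $w'$ of $w$ to $F' = F[\lambda X]$ with $w'(\lambda X) = 0$ and $\ovl{\lambda X} = 1$ in $F'w'$; since $1$ is a simple root of $\ovl{p}$, the ``moreover'' clause gives $F'w' = Fw[\ovl{\lambda X}] = Fw$ and $w'F' = wF$. Uniqueness of $w'$ follows from part (2) of the same lemma, since $\ovl{p} = t^e - 1$ is separable of the same degree as $p$. The main delicate point throughout is the irreducibility of $t^e - \alpha^{-1}c$ over $E$, which is precisely where the hypothesis that $e$ is the \emph{exact} order of $w(X)$ in $wF/vE$ enters the argument.
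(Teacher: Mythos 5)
Your proposal is correct, and the only substantive departure from the paper's proof is in establishing that $t^e - \alpha^{-1}c$ is irreducible over $E$. The paper proves this by passing through $E' = E[\lambda]$: for any extension $v'$ of $v$ to $E'$, one has $v'(\lambda) = v(c)/e$, which has order exactly $e$ in $v'E'/vE$, and the chain $e \leq [v'E':vE] \leq [E':E] \leq e$ (the middle inequality being the Fundamental Inequality) forces $[E':E] = e$. You instead argue by contradiction on a hypothetical monic factorization $t^e - \alpha^{-1}c = g(t)h(t)$ in $E[t]$: every root has value $-w(X)$ in a splitting field, so the constant term of a proper factor $g$ of degree $k < e$ would have value $-kw(X) \in vE$, contradicting that $w(X)$ has exact order $e$ in $wF/vE$. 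This is a Newton-polygon-style argument that bypasses the Fundamental Inequality; both routes hinge on the exactness of the order $e$, as you correctly flag at the end. The rest of your proof --- the residue computation, transferring irreducibility to $F$ via regularity (\Cref{regularfunctionfields}), identifying $F' = F[\lambda X]$ as the root field of $p$, and invoking \Cref{simple_ext}(1) for existence and \Cref{simple_ext}(2) for uniqueness using separability of $t^e-1$ --- matches the paper's argument.
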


\begin{proof}  
Let $E' =E[\lambda]$. The minimal polynomial of $\lambda$ over $E$ divides $t^e- \alpha^{-1}c$, 
thus $[E':E] \leq e$. Let $v'$ be an extension $v$ to $E'$. 
Then $v'(\lambda) = \frac{v(c)}{e}$. 
Since $e$ is the order of $w(X) = -\frac{v(c)}{e}$ in $wF/vE$, 
we get that $e$ is also  the order of $\frac{v(c)}{e}$ in $v'E'/vE$. 
Thus $e \leq [v'E' :vE] \leq [E':E] \leq e $, whereby the minimal polynomial of 
$\lambda $ over $E$ is equal to $t^e -\alpha^{-1}c$. 
By \Cref{regularfunctionfields}, we get that $t^e -\alpha^{-1}c$ is irreducible over $F$. 
Since $X\in F$, we have that $F' = F[\lambda] = F[\lambda X] $, 
whereby $p(t) =t^e -  \alpha^{-1} cX^e$ is the minimal polynomial of $\lambda X$ 
and hence irreducible.

Since $w(\alpha) =0 $ and $w(cX^e) =0$, we get that $w(\alpha^{-1}cX^e) =0$. 
Since $\ovl{\alpha} = \ovl{cX^e}$ in $Fw$, we get that the residue polynomial is 
$$\ovl{p}(t) =t^e - \ovl{\alpha^{-1}cX^e} = t^e -1 \in Fw[t].$$ 

Since $t-1$ is an irreducible factor of $t^e-1$, by \Cref{simple_ext}$(1)$, there exists an extension $w'$ of $w$ to $F'$ such that $w'(\lambda X) =0$ and $\ovl{\lambda X} =1$. Furthermore, since $\car{Ev}$ does not divide $e$, $t^e -1$ is separable over $Ev$, hence we get by \Cref{simple_ext}$(2)$ that such an extension $w'$ is unique. Since $\ovl{\lambda X} = 1$ is a simple root of $t^e -1 $, thus $F'w' =Fw$ by \Cref{simple_ext}$(1)$.
\end{proof}

\section{Ruled residue extensions}\label{3}

In this section, we review some results related to extensions of valuations from a base field $E$ to function fields of transcendence degree one over $E$. This includes the characterization of the Gauss extension, and the Ruled Residue Theorem, etc. Furthermore, we revisit some essential intermediate results from \cite{BG21} and \cite{RRTEC}, which were instrumental in establishing the Ruled Residue Theorem for function fields of conics and function fields of elliptic curves. These results will be used frequently in proving our main result.

For the rest of the section, we fix a field $E$ and a valuation $v$ on $E$.
Let $F/E$ be a function field in one variable over $E$. 
An extension $w$ of $v$ to $F$ is called \emph{residually transcendental} if the residue field extension $Fw/Ev$ is transcendental.
The \emph{set of all residually transcendental extensions} of $v$ to $F$ is \textbf{$\Omega_v(F)$}.

A well-known example of a residually transcendental extension to the rational function field $E(X)$ in one variable $X$ is the Gauss extension of $v$ with respect to $X$, which is characterized in the following proposition.   

\begin{prop}\label{gaussextdef}
There exists a unique valuation $w$ on $E(X)$ with $w|_E=v$, $w(X) = 0$ and such that the residue $\ovl{X}$  of $X$ in $E(X)w$ is transcendental over $Ev$. 
For this valuation $w$, we have that $E(X)w =Ev(\ovl{X})$ 
and $wE(X) = vE$.  
\end{prop}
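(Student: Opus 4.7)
The plan is to prove existence by constructing $w$ explicitly as the Gauss valuation, and then prove uniqueness by showing that any such valuation must agree with this one on every polynomial in $X$.

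For existence, I would define $w$ on $E[X]$ by
$$w\!\left(\sum_{i=0}^n a_i X^i\right) \;=\; \min\{v(a_i) \mid 0\leq i\leq n\}$$
and extend to $E(X)$ by $w(f/g) = w(f)-w(g)$. The only non-trivial axiom is multiplicativity $w(fg) = w(f)+w(g)$ on $E[X]$; once this is verified, well-definedness on $E(X)$ and the triangle inequality are routine. To check multiplicativity, after scaling $f,g$ by elements of $\mg E$ one reduces to the case $w(f) = w(g) = 0$, i.e., both $f,g$ are primitive elements of $\mc O_v[X]$. Then $\ovl f,\ovl g\in Ev[X]$ are both nonzero, so $\ovl{fg} = \ovl f\,\ovl g\neq 0$ since $Ev[X]$ is an integral domain, whence $w(fg) = 0$. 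Once $w$ is known to be a valuation, the remaining properties are immediate: $w|_E = v$ and $w(X)=0$ from the definition; $\ovl X$ is transcendental over $Ev$ because a relation $\sum \ovl{b_i}\ovl X^i = 0$ with the $b_i\in\mc O_v$ not all in $\mfm_v$ would force $w\bigl(\sum b_iX^i\bigr)>0$, contradicting the definition; and one reads off $wE(X) = vE$ as well as $E(X)w = Ev(\ovl X)$ directly from the formula.

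For uniqueness, suppose $w'$ is any valuation on $E(X)$ satisfying $w'|_E = v$, $w'(X) = 0$, and $\ovl X$ transcendental over $Ev$ in $E(X)w'$. For any nonzero $f = \sum a_iX^i$, the triangle inequality forces $w'(f)\geq \min_i v(a_i) = w(f)$. To obtain the reverse inequality, choose $i_0$ with $v(a_{i_0})$ minimal and set $g = a_{i_0}^{-1}f$; then $g\in\mc O_{w'}[X]$ and its residue $\ovl g\in Ev[\ovl X]$ has coefficient $1$ at degree $i_0$, so by the assumed transcendence of $\ovl X$ we get $\ovl g\neq 0$, hence $w'(g) = 0$ and thus $w'(f) = v(a_{i_0}) = w(f)$. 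Since $w$ and $w'$ agree on $E[X]$, they agree on $E(X)$.

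The one step that requires genuine care is the multiplicativity of the Gauss valuation on $E[X]$, which is essentially Gauss's lemma phrased for valuation rings; the rest is bookkeeping from the definition and the transcendence assumption on $\ovl X$.
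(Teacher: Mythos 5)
Your proof is correct. The paper itself gives no argument here but simply cites \cite[Corollary 2.2.2]{EP}, and your write-up is essentially the standard textbook construction that this reference contains: define the Gauss valuation on $E[X]$ by the minimum of coefficient values, verify multiplicativity via the integral-domain argument for $Ev[X]$ (Gauss's lemma in the valuation-ring setting), and prove uniqueness by using $w'(X)=0$ together with transcendence of $\ovl X$ to pin down $w'$ on every primitive polynomial; the deductions that $wE(X)=vE$ and $E(X)w=Ev(\ovl X)$ follow from the explicit formula exactly as you indicate.
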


\begin{proof} 
See \cite[Corollary 2.2.2]{EP}.
\end{proof}

We call a valuation on $E(X)$ a \emph{Gauss extension of $v$ to $E(X)$} if it is the Gauss extension with respect to $Z$ for some  $Z\in E(X)$ with $E(Z)=E(X)$, that is, $w(Z) =0$ and $\ovl{Z}$ is transcendental over $Ev$.

Let $F/E$ be a function field in one variable over $E$. Note that for a residually transcendental extension $w$ of $v$ to $F$, there always exists $Z\in \mc O_w$ such that $\ovl{Z}$ is transcendental over $Ev$. For such a $Z$, $w|_{E(Z)}$ is the Gauss extension with respect to $Z$.

\begin{thm}[Ohm]\label{RRT}
Let $w$ be a residually transcendental extension of $v$ to $E(X)$.
Then $E(X)w/Ev$ is ruled.
\end{thm}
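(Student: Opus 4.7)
The plan is to combine Proposition \ref{gaussextdef}, the Fundamental Inequality (Theorem \ref{funda_ineq}), and a Lüroth-type argument, together with a finite base extension that exhibits $w$ as a Gauss extension.

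First, since $w$ is residually transcendental, I choose $Z\in\mathcal{O}_w$ such that $\bar Z$ is transcendental over $Ev$; such a $Z$ exists by definition of residual transcendentality. Then $w|_{E(Z)}$ is the Gauss extension with respect to $Z$, and Proposition \ref{gaussextdef} gives $E(Z)w=Ev(\bar Z)$ and $wE(Z)=vE$. Because $X$ and $Z$ are both transcendentals over $E$ inside a field of transcendence degree one, $E(X)/E(Z)$ is a finite algebraic extension. Theorem \ref{funda_ineq} therefore yields
$$[E(X)w:Ev(\bar Z)]\leq [E(X):E(Z)]<\infty,$$
so $E(X)w$ is a finite extension of the rational function field $Ev(\bar Z)$, and in particular has transcendence degree one over $Ev$.

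To upgrade finiteness over $Ev(\bar Z)$ to ruledness over $Ev$, I would pass to a suitable finite base extension. Concretely, I would produce a finite extension $E'/E$ with a valuation $v'$ extending $v$ and a prolongation $w'$ of $w$ to $E'(X)=E(X)\cdot E'$ such that $w'$ is itself a Gauss extension on $E'(X)$ with respect to some generator $Y\in E'(X)$ of $E'(X)/E'$. This is produced from the standard structure theory of residually transcendental extensions of a rational function field (a ``distinguished pair'' $(a,\delta)$ over the algebraic closure of $E$ realizes $w$, and a finite extension suffices to bring $a$ and a representative of the class of $\delta$ into the base). Granting this, Proposition \ref{gaussextdef} gives $E'(X)w'=E'v'(\bar Y)$, which is rational (hence ruled) over $E'v'$, and a fortiori over $Ev$.

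Finally, since $E(X)w\subseteq E'(X)w'$ is a subfield of finite index in a rational function field over $E'v'$, a Lüroth-type theorem (e.g.\ Gordan's extension of Lüroth to non-algebraically-closed bases) applied to the intermediate field $E(X)w$ gives $E(X)w=L(y)$ for some finite extension $L/Ev$ and some $y$ transcendental over $L$. This is precisely the assertion that $E(X)w/Ev$ is ruled.

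The main obstacle is the middle step: producing a finite extension $E'/E$ and a prolongation $w'$ that is a genuine Gauss extension with respect to a new generator. The routine bookkeeping (Fundamental Inequality, Lüroth) is standard, but identifying the correct $E'$ and the correct Möbius-type change of variable taking $w$ to a Gauss valuation requires the careful analysis of the pair $(a,\delta)$ attached to $w$, in the spirit of Alexandru–Popescu–Zaharescu, and it is here that $wE$ being allowed to be arbitrary — not just $\mathbb{Z}^n$ as in Nagata — forces a more delicate construction.
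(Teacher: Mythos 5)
The paper offers no proof of this statement; it simply cites Ohm's original article \cite[Theorem 3.3]{Ohm}, so there is no internal argument to compare against. Your opening steps are fine: picking $Z\in\mathcal{O}_w$ with $\ovl{Z}$ transcendental, observing $w|_{E(Z)}$ is the Gauss extension, and using the Fundamental Inequality to conclude $[E(X)w:Ev(\ovl{Z})]<\infty$ is correct and is exactly how the paper uses \Cref{gaussextdef} and \Cref{funda_ineq} elsewhere.

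The genuine gap is in the last step, the L\"uroth argument, and it would not be fixed even if the middle step (finding $E'/E$ with $w'$ a Gauss extension) were carried out. The problem is that L\"uroth (in any of its forms over a non-algebraically-closed base) tells you that an \emph{intermediate} field between $k$ and $k(t)$ is rational over $k$; it says nothing about a subfield of $k(t)$ that contains only a proper subfield of the constants $k$. In your setup you have $Ev\subseteq E(X)w\subseteq E'v'(\ovl{Y})$, but $E'v'$ need not lie inside $E(X)w$, so L\"uroth does not apply with $k=E'v'$. And the abstract field-theoretic statement you would need — that every subfield of a rational function field $k(t)$ of transcendence degree one over a given subfield $k_0\subseteq k$ is ruled over $k_0$ — is simply false. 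The function field of the anisotropic conic $x^2+y^2=-1$ over $\mathbb{R}$ embeds into $\mathbb{C}(t)$ and contains $\mathbb{R}$, yet it is not ruled over $\mathbb{R}$: it is not rational over $\mathbb{R}$ (no $\mathbb{R}$-point), and $\mathbb{C}$ is not a subfield of it (since $\mathbb{R}$ is relatively algebraically closed). So embedding the residue field into a rational function field over a finite base extension, as you propose, yields at best a genus-zero function field, not a ruled one.

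What this means is that Ohm's theorem is genuinely valuation-theoretic and cannot be deduced purely from the finiteness of $[E(X)w:Ev(\ovl{Z})]$ plus an abstract embedding into a rational field. Ohm's own proof (and the later treatments via minimal/distinguished pairs, e.g.\ Alexandru--Popescu--Zaharescu) shows directly that $E(X)w = L(\ovl{\psi})$ for $L$ a specific finite extension of $Ev$ arising as a residue field of a finite extension of $E$, and $\ovl{\psi}$ the residue of an explicit element of $\mathcal{O}_w$; in other words it produces the ruled structure by computing the residue field, not by recognizing it inside a bigger rational field. You would need to replace the L\"uroth step with an argument that exhibits a generator of the constant field $L$ as the residue of an element algebraic over $E$, and then shows that $E(X)w$ is generated over $L$ by a single transcendental residue. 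The middle step of your proposal (finding $E'/E$ making $w'$ a Gauss extension) is indeed achievable via the minimal-pair machinery, but it is a means towards that explicit description, not a reduction to L\"uroth.
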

\begin{proof}
See \cite[Theorem 3.3]{Ohm}).
\end{proof}

In the rest of the section, we consider function fields $F$ that are quadratic extensions of the rational function field $E(X)$. We will assume that $v(2) =0$, that is $\car{Ev} \neq 2$. 
Thus $\car E\neq 2$ and $F =E(X)[\sqrt{f(X)}]$ for a square-free polynomial $f(X) \in E[X]$.

\begin{obs} \label{Conseq_RRT} Assume that $v(2) =0$. Let $F/E$ be a function field in one variable over $E$. Let $w \in \Omega_v(F)$. Thus $Fw/Ev$ is transcendental.  Let $X \in F\setminus E$. If $Fw =E(X)w$, then $Fw/Ev$ is ruled by the Ruled Residue Theorem.
Assume that $[F:E(X)] =2$. 
\begin{enumerate}
 \item  If $Fw/Ev$ is non-ruled, then using the fundamental inequality (see \cite[Corollary 2.2]{RRTEC}) we get that 
   $[Fw: E(X)w] =2$ and $wF =wE(X)$.
   
   \item Let $\ell$ be the relative algebraic closure of $Ev$ in $Fw$. If $\ell \not \subseteq E(X)w$, then $Fw = \ell E(X)w$, whereby $Fw/Ev$ is ruled.
   Thus, if $Fw/Ev$ is non-ruled then $\ell \subseteq E(X)w$ (See \cite[Lemma 3.2]{BG21}).
\end{enumerate}

\end{obs}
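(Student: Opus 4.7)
The plan is to handle the three assertions of this observation in sequence, each as a short application of the Ruled Residue Theorem (\Cref{RRT}) together with the Fundamental Inequality (\Cref{funda_ineq}).

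For the opening claim, I would note that if $Fw=E(X)w$, then the restriction $w_0:=w|_{E(X)}$ satisfies $E(X)w_0=Fw$, which is transcendental over $Ev$ by the standing hypothesis $w\in\Omega_v(F)$. Hence $w_0$ is a residually transcendental extension of $v$ to the rational function field $E(X)$, so \Cref{RRT} yields that $E(X)w_0/Ev$ is ruled; since it coincides with $Fw$, we conclude $Fw/Ev$ is ruled.

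For (1), the approach is to apply \Cref{funda_ineq} to the degree-$2$ extension $F/E(X)$ with $w$ above $w_0$, giving
\[
[wF:w_0E(X)]\cdot[Fw:E(X)w_0]\leq[F:E(X)]=2.
\]
If $[Fw:E(X)w_0]=1$, then $Fw=E(X)w$, which by the opening claim would force $Fw/Ev$ to be ruled, contradicting the hypothesis. So $[Fw:E(X)w_0]\geq 2$, and the displayed inequality becomes an equality throughout: $[Fw:E(X)w_0]=2$ and $[wF:w_0E(X)]=1$, i.e.\ $wF=wE(X)$.

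For (2), I would pick $\beta\in\ell\setminus E(X)w$. If $Fw/Ev$ is already ruled there is nothing further to show, so one argues by contradiction: assume $Fw/Ev$ is non-ruled; then part (1) gives $[Fw:E(X)w]=2$, and the strict containment $E(X)w\subsetneq E(X)w(\beta)\subseteq Fw$ forces $Fw=E(X)w(\beta)=\ell\cdot E(X)w$. By \Cref{RRT}, $E(X)w=M(z)$ for some finite $M/Ev$ and some $z$ transcendental over $M$, so $Fw=M(\beta)(z)$ with $M(\beta)/Ev$ finite (since $\beta\in\ell$). Hence $Fw/Ev$ is ruled, contradicting the assumption; therefore $\ell\subseteq E(X)w$.

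I do not expect any real obstacle here. The only subtlety worth verifying is that in each application of \Cref{RRT} the relevant restriction really is residually transcendental; this is automatic because $Fw/Ev$ is transcendental while $[Fw:E(X)w]\leq[F:E(X)]<\infty$ forces $E(X)w/Ev$ to be transcendental as well.
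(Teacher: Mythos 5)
Your proposal is correct, and since the paper treats this as an ``Observation'' and simply cites \cite[Corollary 2.2]{RRTEC} and \cite[Lemma 3.2]{BG21}, you are in effect supplying the self-contained derivation that those references contain. Your argument runs along exactly the lines that those results use: the Fundamental Inequality for the degree-$2$ extension $F/E(X)$ for part (1), and the Ruled Residue Theorem applied to $w|_{E(X)}$ for the opening claim and for part (2). The closing remark about why $w|_{E(X)}$ is residually transcendental is the right sanity check.

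One small streamlining worth noting for part (2): you do not actually need to argue by contradiction or to invoke part (1). The Fundamental Inequality applied to $F/E(X)$ already gives $[Fw:E(X)w]\leq 2$ unconditionally (no non-ruledness hypothesis needed). Thus, as soon as one picks $\beta\in\ell\setminus E(X)w$, the chain $E(X)w\subsetneq E(X)w[\beta]\subseteq Fw$ forces $Fw=E(X)w[\beta]=\ell\cdot E(X)w$ outright, and then the RRT decomposition $E(X)w=M(z)$ gives $Fw=M(\beta)(z)$ ruled over $Ev$. This recovers the unconditional assertion ``$Fw=\ell E(X)w$'' that the observation states, rather than only the contrapositive you derive; your version is logically sufficient for the conclusion the paper uses, but the direct argument is both shorter and matches the phrasing of the statement more faithfully.
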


 Suppose $F=E(X)[\sqrt a]$ for some $a\in E\setminus \sq E$ (or $F =E(X)( \sqrt Z)$ for some $Z \in E(X)$ such that $E(Z) =E(X)$ resp.). Then $F=E[\sqrt{a}](X)$ (or $F=E(Z)[\sqrt{Z}]=F(\sqrt{Z})$ resp.) and hence by the Ruled Residue Theorem, we get that for every residually transcendental extension $w$ of $v$ to $F$, $Fw/Ev$ is ruled. Now we consider quadratic extensions of the form $F =E(X)[\sqrt{au}]$ or $F =E(X)[\sqrt{Zu}]$ where $u\in E(X)$, and find certain extensions $w$ on $F$ for which $Fw/Ev$ is ruled.

 \begin{prop}
  \label{u_algebraic_implies_ruled}
 Assume that $v(2)=0$.
 Let  $u\in E(X)$ and $F=E(X)[\sqrt{uX}]$. 
Let $w \in \Omega_v(F)$ with  $w(u)=0$
and such that $\ovl{u}$ is algebraic over $Ev$. 
Then $Fw/Ev$ is ruled.   
\end{prop}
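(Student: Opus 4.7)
The plan is to invoke \Cref{Conseq_RRT} after reducing to the case $w(X)=0$ by rescaling, and then to analyze $\ovl{X}$ separately depending on whether it is transcendental or algebraic over $Ev$.

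Since $w$ is residually transcendental and $[F:E(X)]=2$, the restriction $w|_{E(X)}$ is also residually transcendental, so \Cref{RRT} gives that $E(X)w/Ev$ is ruled. If $Fw=E(X)w$, we are done. Otherwise \Cref{Conseq_RRT}(1) yields $[Fw:E(X)w]=2$ and $wF=wE(X)$; I work under this assumption. Set $y=\sqrt{uX}\in F$, so $y^2=uX$. Since $w(u)=0$, I have $w(y)=w(X)/2\in wF=wE(X)$. If $w(X)/2\in vE$, pick $d\in E$ with $v(d)=w(X)/2$; otherwise, enlarge the base field via a suitable extension $E'\supseteq E$ arranged to keep the residue field unchanged (so that $F'w'=Fw$ by the Fundamental Inequality) and then pick $d\in E'$. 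Rescaling $X\mapsto X/d^2$, $y\mapsto y/d$ preserves the relation $y^2=uX$, leaves $u$ unchanged, and yields $w(X)=w(y)=0$; thus $\ovl{X},\ovl{y}\in Fw$ satisfy $\ovl{y}^2=\ovl{u}\ovl{X}$.

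In Case A, $\ovl{X}$ is transcendental over $Ev$. Then $w|_{E(X)}$ is the Gauss extension at $X$ (\Cref{gaussextdef}), so $E(X)w=Ev(\ovl{X})$, and $Ev$ is algebraically closed in $Ev(\ovl{X})$. The hypothesis that $\ovl{u}$ is algebraic over $Ev$ therefore forces $\ovl{u}\in Ev^{\times}$. From $\ovl{y}^2=\ovl{u}\ovl{X}$ I obtain $\ovl{X}=\ovl{y}^2/\ovl{u}\in Ev(\ovl{y})$, giving $Fw=Ev(\ovl{X},\ovl{y})=Ev(\ovl{y})$, a rational function field over $Ev$; this is ruled.

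In Case B, $\ovl{X}$ is algebraic over $Ev$. Let $L$ be the relative algebraic closure of $Ev$ in $Fw$; then $\ovl{u},\ovl{X}\in L$, hence $\ovl{y}^2=\ovl{u}\ovl{X}\in L$ and $\ovl{y}\in L$. Moreover, $\ovl{y}\notin E(X)w$: if $\ovl{y}=\ovl{\zeta}$ for some $\zeta\in E(X)$ with $w(\zeta)=0$, then setting $t=(y-\zeta)/\pi$ with $\pi\in E(X)$ satisfying $w(\pi)=w(y-\zeta)$, the identity $(y-\zeta)(y+\zeta)=uX-\zeta^2\in E(X)$ shows $\ovl{t}\in E(X)w$; a careful residue analysis (handling cancellations among $E(X)$-coefficients) then gives that every element of $\mc{O}_w\cap F$ has residue in $E(X)w$, forcing $Fw=E(X)w$, a contradiction. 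Thus $L\not\subseteq E(X)w$, and \Cref{Conseq_RRT}(2) yields that $Fw/Ev$ is ruled. The main obstacle is the base-change step in the reduction (where one must ensure that the extended valuation leaves the residue field unchanged) combined with the residue cancellation argument in Case B.
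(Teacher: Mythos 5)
The reduction to $w(X)=0$ is where the argument breaks down. You want $d$ with $v(d)=w(X)/2$, but $w(X)/2=w(y)$ lies in $wF=wE(X)$, which generically properly contains $vE$; so $d\in E$ exists only when the order $e$ of $w(X)$ in $wE(X)/vE$ is $1$. When $e>1$, you must extend the base. But here the extension is forced to be the wrong kind: any $E'\supseteq E$ with $w(X)/2\in v'E'$ has $v'E'\subseteq wF$ (since $w(X)/2=w(y)\in wF$ already), so the extension $F'=FE'$ of $F$ sees no new value-group growth, and by the Fundamental Inequality the degree $[F':F]=[E':E]$ must be absorbed by $[F'w':Fw]$. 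In other words, one expects $F'w'\supsetneq Fw$, not $F'w'=Fw$. And ruledness of $F'w'/E'v'$ does not descend to the subfield $Fw/Ev$ (a non-rational curve can have a rational cover). So the sentence ``enlarge the base field\dots arranged to keep the residue field unchanged (so that $F'w'=Fw$ by the Fundamental Inequality)'' is asserting exactly the opposite of what the Fundamental Inequality predicts in this situation, and is a genuine gap.

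The dichotomy you are after can be salvaged without rescaling. From $[Fw:E(X)w]=2$, $wF=wE(X)$ (\Cref{Conseq_RRT}(1)) one deduces that the order $e$ of $w(X)$ in $wF/vE$ is \emph{odd} (else $w(X)/2\notin wE(X)$), and one can choose $c,c'\in\mg{E}$ with $w(cX^e)=w(c'y^e)=0$; since $e$ is odd, $y^e=y\cdot(uX)^{(e-1)/2}$, so $F=E(X)[c'y^e]$ and $(c'y^e)^2=(c')^2u^eX^e\in E(X)$. Now run your two cases on $\ovl{cX^e}$ (transcendental over $Ev$ or not), with $c'y^e$ playing the role that $y$ plays in your proposal; in the transcendental case $E(X)w=Ev(\ovl{cX^e})$ and $Fw=Ev(\ovl{cX^e})[\ovl{c'y^e}]=Ev(\alpha)(\ovl{c'y^e})$ with $\alpha=\ovl{(c')^2u^e/c}$ algebraic, which is ruled; in the algebraic case your Case B residue argument applies to $c'y^e$. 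That residue argument (the claim that $\ovl{y}\in E(X)w$ forces $Fw=E(X)w$) is in fact correct and does not require iteration: for $z=a+by\in\mg{\mc{O}_w}$ with $b\neq0$, writing $z=b(y-\zeta')$ with $\zeta'=-a/b\in E(X)$, one checks $\ovl{\zeta'}=\ovl y$ and $\ovl z=\ovl{b(uX-(\zeta')^2)}/(2\ovl{\zeta'})\in E(X)w$ in all sub-cases. Since you labelled this ``a careful residue analysis'' without actually carrying it out, and since it is the crux of that case, it also needs to be written out.
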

\begin{proof}
See \cite[Proposition 4.5]{RRTEC}.
\end{proof}

\begin{prop}
 \label{u_algebraic_implies_ruled 0}
 Assume that $v(2)=0$.
 Let $a \in  \mg E$, $u\in E(X)$ and $F=E(X)[\sqrt{au}]$. 
Let $w \in \Omega_v(F)$ with  $w(u)=0$
and such that $\ovl{u}$ is algebraic over $Ev$. 
Then $Fw/Ev$ is ruled. 
 \end{prop}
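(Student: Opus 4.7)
The plan is to reduce the claim to Proposition \Cref{u_algebraic_implies_ruled} by a linear change of variable on $X$. The idea is: if we can find a generator $X'$ of $E(X)/E$ with $w(X') = v(a)$, then writing $au = u' X'$ with $u' = au/X' \in E(X')$ yields $F = E(X')[\sqrt{u' X'}]$ and $w(u') = 0$, which matches the hypotheses of Proposition \Cref{u_algebraic_implies_ruled} provided $\ovl{u'}$ is algebraic over $Ev$.

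First I would dispose of the case $v$ trivial separately: then the hypothesis that $\ovl{u}$ is algebraic over $Ev = E$ forces $u \in E$ (as $E$ is algebraically closed in $E(X)$), and $F$ is either rational over $E$ or the constant quadratic extension $E[\sqrt{au}](X)$, both of which are ruled. Assume henceforth that $v$ is non-trivial; then $vE$ is unbounded below as an ordered abelian group, so one can pick $c_1 \in \mg E$ with $v(c_1) < w(X)$, which by the ultrametric inequality yields $w(X - c_1) = v(c_1) \in vE$. Next choose $c_2 \in \mg E$ with $v(c_2) = v(a) - v(c_1)$, and set $X' = c_2(X - c_1) = c_2 X - c_1 c_2 \in E(X)$. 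Being affine in $X$, the element $X'$ generates $E(X)$ over $E$, and $w(X') = v(c_2) + v(c_1) = v(a)$, as desired.

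The remaining verification is that $\ovl{u'} = \ovl{au/X'}$ is algebraic over $Ev$. The key computation uses the factorisation $X' = -c_1 c_2(1 - X/c_1)$, in which $w(X/c_1) = w(X) - v(c_1) > 0$, so that $\ovl{1 - X/c_1} = 1$ in $E(X)w$; consequently
\[
\ovl{a/X'} = \ovl{a/(-c_1 c_2)} \in \mg{Ev},
\]
since $v(a/(-c_1 c_2)) = v(a) - v(c_1 c_2) = 0$. Combined with $w(u) = 0$ and the hypothesis that $\ovl{u}$ is algebraic over $Ev$, this gives $\ovl{u'} = \ovl{u} \cdot \ovl{a/(-c_1 c_2)}$, which is also algebraic over $Ev$. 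Proposition \Cref{u_algebraic_implies_ruled} applied to the presentation $F = E(X')[\sqrt{u' X'}]$ then yields that $Fw/Ev$ is ruled. The main obstacle is securing the shift $c_1$ with $v(c_1) < w(X)$, which is the reason the trivial-valuation case must be separated out at the start.
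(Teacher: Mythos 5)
Your reduction is correct and takes a genuinely different route from the paper. The paper proves this proposition directly, by a fairly elaborate argument on the residue field: it invokes \Cref{Conseq_RRT} to reduce to the case $[Fw:E(X)w]=2$, $wF=wE(X)$, and $\ell\subseteq E(X)w$ (where $\ell$ is the relative algebraic closure of $Ev$ in $Fw$), then uses Lemma~4.1 and Corollary~2.5 of \cite{RRTEC} together with Lemma~2.8 and Lemma~2.2 of \cite{BG21} to exhibit an element $\vartheta=\ovl{a\phi^2}$ with $E(X)w=\ell(\vartheta)$ and to conclude $Fw=\ell(\sqrt{\ovl{u}\vartheta})$. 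Your proof instead reduces \Cref{u_algebraic_implies_ruled 0} to its companion \Cref{u_algebraic_implies_ruled} by the affine change of variable $X'=c_2(X-c_1)$, chosen so that $w(X')=v(a)$; then $F=E(X')[\sqrt{u'X'}]$ with $u'=au/X'$ satisfying $w(u')=0$ and $\ovl{u'}=\ovl{u}\cdot\ovl{a/(-c_1c_2)}$ algebraic over $Ev$. What you gain is a very short proof that bypasses the residue-field decomposition machinery entirely; what you need is the separate treatment of the trivial-valuation case (where the shift $c_1$ with $v(c_1)<w(X)$ is unavailable), which you handle correctly by observing that triviality of $v$ forces $w$ to be trivial via Abhyankar, whence $\ovl{u}=u\in E$. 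The one place where the writeup is slightly compressed is precisely this point: the phrase ``$\ovl{u}$ algebraic over $Ev=E$ forces $u\in E$'' silently uses that $w$ is trivial so that $\ovl{u}=u$; it would be worth stating that explicitly. Modulo that cosmetic remark, the argument is sound.
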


\begin{proof}
Let $\ell$ denote the relative algebraic closure of $Ev$ in $Fw$.
In view of \Cref{Conseq_RRT}, we may assume that $[Fw:E(X)w]=2$ and $wF =wE(X)$ and  $\ell\subseteq E(X)w$. 
We get that $w(a)=w(au)\in 2wF =2wE(X)$ and for every $\phi\in E(X)^{\times}$ with $w(a\phi^2)=0$, it follows by \cite[Lemma 4.1]{RRTEC} that $\ovl {u a\phi^2}$ is transcendental over $Ev$ and 
since $\ovl{u}\in \ell$, $\ovl {a\phi^2} $ is transcendental over $Ev$. 
Moreover, for every such $\phi$, if $\ovl{a\phi^2} \in  E(X)w^{\times 2}$, then by \cite[Corollary 2.5]{RRTEC} $Fw = E(X)w\left[\sqrt{\ovl{u a\phi^2}}\right] = E(X)w[\sqrt{\ovl{u}}]$, and since $\ell \subseteq E(X)w$ we obtain $Fw =E(X)w$. As $Fw \neq E(X)w$, we get that $\ovl{a\phi^2} \notin  E(X)w^{\times 2}$.

Let $w'$ be an extension of $w|_{E(X)}$ to $E(X)[\sqrt{a}]$.
Then $ E(X)[\sqrt{a}]w'=E(X)w\left[\sqrt{\ovl{a\phi^2}}\right]$ and hence $[E(X)[\sqrt{a}]w':E(X)w]=2$.
By \cite[Lemma 4.1]{RRTEC}, the relative algebraic closure of $Ev$ in $E(X)[\sqrt{a}]w'$ is contained in $E(X)w$ and hence is equal to $\ell$. 
By \cite[Lemma 2.8]{BG21},  there exists $\phi \in E(X)\setminus \{0\}$ with $w(a\phi^2) =0$ such that 
$E(X)[\sqrt{a}]w'=\ell(\ovl{\sqrt{a}\phi})$. Since $\ell \subseteq E(X)w$ and $[E(X)[\sqrt{a}]w':E(X)w]=2$, we have $E(X)w=\ell(\ovl{a \phi^2})$. Set $\vartheta =\ovl{a\phi^2}$. Using \cite[Lemma 2.2]{BG21}, we get that
$$Fw = E(X)w[\sqrt{\ovl{u}\vartheta}]=\ell(\vartheta)[\sqrt{\ovl{u}\vartheta}]\,.$$  
Since $\ovl{u}\in\ell$, we have $Fw=\ell(\vartheta)[\sqrt{\ovl{u}\vartheta}]=\ell (\sqrt{\ovl{u}\vartheta})$, which is ruled over $Ev$.
\end{proof}

Let $F = E(X)[\sqrt {f(X)}]$ where $f(X) \in E[X]$ is a square-free polynomial. 
 The following statement is a further observation on \cite[Corollary 4.4]{RRTEC}.
There $E(X)$ was viewed as a finite extension of $E(f)$, that is $E(X) = E(f)[\theta] $ for some $\theta\in E(X)$.
For a residually transcendental extension $w$ of $v$ to $F$, there the statement  required that the minimal polynomial of $\theta$ over $E(f)$ is a primitive polynomial in $\mc O_w[t]$, however the statement holds with the same proof even if we replace the minimal polynomial by a  scalar multiple of the minimal polynomial that is primitive. 

\begin{cor}
\label{ruled_applicable}
Assume that $v(2) =0$.
Let $w \in \Omega_v(F)$.
Let $f\in E[X]$. 
Let $\theta \in E(X)$ be such that $E(X) = E(f)[\theta]$ and let $p_\theta \in E(f)[t]$ be an irreducible primitive polynomial such that $p_\theta(\theta) =0$.
Assume that
$\ovl{\theta}$ is a simple root of $\ovl{p_\theta}$.
Then $wE(X) =wE(f)$ and $E(X)w =E(f)w[\ovl{\theta}]$.
Furthermore, if $\ovl{\theta}$ is  algebraic over $Ev$, then $E(X)[\sqrt{f}]w'/Ev$ is ruled for any extension $w'$ of $w$ to $E(X)[\sqrt{f}]$.
\end{cor}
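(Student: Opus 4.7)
The plan is to apply \Cref{simple_ext}(1) twice---first to the presentation $E(X) = E(f)[\theta]$, and then, with $Y = \sqrt{f}$, to the presentation $F = E(Y)[\theta]$---and then to finish with the Ruled Residue Theorem.

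For the first assertion, take $K = E(f)$, $L = E(X) = E(f)[\theta]$, and the valuation $w|_{E(f)}$. By hypothesis $p_\theta \in E(f)[t]$ is primitive irreducible and $\ovl{\theta}$ is a simple root of $\ovl{p_\theta}$; moreover $w(\theta) = 0$, since $\ovl{\theta}$ lies in $E(X)w$ and is in particular nonzero (so the irreducible factor $q$ of $\ovl{p_\theta}$ with $q(\ovl\theta)=0$ differs from $t$). As the proof of \Cref{simple_ext}(1) actually records, its value-group and residue-field conclusions apply to any extension of $v$ to $L$ realising the simple-root condition; applied to our $w$ they yield $wE(X) = wE(f)$ and $E(X)w = E(f)w[\ovl{\theta}]$.

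For the second assertion, I change presentation. Since $f \in E[X]$ is non-constant it is transcendental over $E$, hence so is $Y = \sqrt{f}$, and $E(Y) = E(\sqrt{f})$ is a rational function field over $E$. From $E(f)[\sqrt{f}] = E(Y)$ we obtain
\[
F \;=\; E(X)[\sqrt{f}] \;=\; E(f)[\theta][\sqrt{f}] \;=\; E(Y)[\theta],
\]
and a degree count gives $[F : E(Y)] = [E(X) : E(f)] = \deg p_\theta$. Hence $p_\theta$ remains irreducible over $E(Y)$; since its coefficients still lie in $E(f) \subseteq E(Y)$, it is still primitive in $\mc{O}_{w'|_{E(Y)}}[t]$, with residue polynomial $\ovl{p_\theta}$ again having $\ovl{\theta}$ as a simple root (the coefficients and the root are unchanged, and simplicity only asks that the derivative $\ovl{p_\theta}'(\ovl\theta)$, already nonzero in $E(X)w$, be nonzero in $Fw'$, which it is). A second application of \Cref{simple_ext}(1), now with $K = E(Y)$ and $L = F$, then yields $Fw' = E(Y)w'[\ovl{\theta}]$.

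Finally, since $F/E(X)$ is algebraic and $w|_{E(X)} \in \Omega_v(E(X))$, any extension $w'$ of $w$ to $F$ lies in $\Omega_v(F)$, so $Fw'/Ev$ is transcendental. Because $\ovl{\theta}$ is algebraic over $Ev$, the subfield $E(Y)w' \subseteq Fw' = E(Y)w'[\ovl{\theta}]$ is forced to be transcendental over $Ev$, making $w'|_{E(Y)}$ a residually transcendental extension of $v$ to the rational function field $E(Y)$. By the Ruled Residue Theorem \Cref{RRT}, $E(Y)w' = L(\eta)$ for some finite extension $L/Ev$ and some $\eta$ transcendental over $Ev$. Since $L[\ovl{\theta}]$ is then a finite extension of $Ev$,
\[
Fw' \;=\; L(\eta)[\ovl{\theta}] \;=\; L[\ovl{\theta}](\eta)
\]
is ruled over $Ev$, as required. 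The one delicate point is transporting the primitive/irreducible/simple-root structure of $p_\theta$ from the base $E(f)$ to the larger base $E(Y)$; but this is routine since the coefficients of $p_\theta$ never leave $E(f)$, so all relevant reductions and derivatives are unchanged.
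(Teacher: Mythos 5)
The paper offers no proof of this corollary; it is simply cited from \cite[Corollary~4.4]{RRTEC} (with a cosmetic generalization from ``minimal polynomial'' to ``primitive scalar multiple of the minimal polynomial''). So there is no in-paper argument to compare against line by line. That said, your reconstruction is sound and, as far as one can tell, follows the natural route that the cited proof would take: view $E(X)$ over $E(f)$ for the residue-field and value-group statements, then change presentation to $F=E(Y)[\theta]$ with $Y=\sqrt f$ so that the whole function field $F$ becomes a \emph{finite} extension of a \emph{rational} function field $E(Y)$, apply \Cref{simple_ext}$(1)$ again to obtain $Fw'=E(Y)w'[\ovl\theta]$, and invoke Ohm's Ruled Residue Theorem on $E(Y)$. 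The degree count $[F:E(Y)]=[E(X):E(f)]=\deg p_\theta$, used to see that $p_\theta$ stays irreducible over $E(Y)$, is correct, and the observation that primitivity, the residue polynomial, and the derivative test are preserved under $E(f)\subseteq E(Y)$ is exactly the point that needs saying.

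One genuine gap in the write-up: you assert $w(\theta)=0$ ``since $\ovl\theta$ lies in $E(X)w$ and is in particular nonzero.'' The first clause only gives $\theta\in\mc O_w$, i.e.\ $w(\theta)\geq 0$; it does not give $\ovl\theta\neq 0$, since $0\in E(X)w$ as well. The hypothesis ``$\ovl\theta$ is a simple root of $\ovl{p_\theta}$'' is compatible with $\ovl\theta=0$ (take $\ovl{p_\theta}=t\,r(t)$ with $r(0)\neq 0$), so $\ovl\theta\neq 0$ is not a consequence of the stated hypotheses. You do need $\ovl\theta\neq 0$, because \Cref{simple_ext} requires the irreducible factor $q$ with $q(\ovl\theta)=0$ to differ from $t$, and the ``Moreover'' clause you use is stated for extensions $v'$ with $v'(\alpha)=0$. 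The cleanest repair is to note that the case $\ovl\theta=0$ is innocuous: replace $\theta$ by $\theta+1$, which still generates $E(X)$ over $E(f)$, and $\ovl{\theta+1}=1$ is a simple root of the corresponding residue polynomial; or simply record $\ovl\theta\neq 0$ as a standing assumption, as in every application in the paper (e.g.\ in \Cref{base-case}, where $\ovl\theta=\ovl{\lambda X}=1$). Either way, the claim should not be derived from ``$\ovl\theta\in E(X)w$.''
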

\begin{proof}
See \cite[Corollary 4.4]{RRTEC}.
\end{proof}

\begin{lem}\label{distinctvalues}
Assume that $v(2)=0.$
Let $F/E$ is the function field of a hyperelliptic curve. 
Let $f \in E[t]$ and $X\in F\setminus E$ be such that  $F = E(X) [\sqrt{f(X)}]$. Let $f_1(X), f_2(X) \in E[X]$ be such that $f(X) = f_1(X) +f_2(X)$.
 Let $w\in \Omega_v(F)$ such that $w(f_1(X)) < w(f_2(X))$. Then either $Fw/Ev$ is ruled or $Fw = F'w'$ for some extension $w'$ of $w|_{E(X)}$ to $F' = E(X)[\sqrt{f_1(X)}]$.     
\end{lem}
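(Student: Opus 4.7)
The approach is a case analysis on the square classes of $f$ and $f_1$ in $\sq{E(X)}$, reducing the main case to two applications of \Cref{simple_ext} on a biquadratic extension. Set $u = f(X)/f_1(X) \in \mg{E(X)}$; the hypothesis $w(f_1(X)) < w(f_2(X))$ gives $w(u) = 0$ and $\ovl{u} = 1$ in $E(X)w$, so since $v(2)=0$, the residue $1$ is a simple root of $\ovl{t^2-u} = t^2-1$.

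If $f \in \sq{E(X)}$, then $F = E(X)$ and $Fw = E(X)w$ is ruled by the Ruled Residue Theorem (\Cref{RRT}). If $f \notin \sq{E(X)}$ but $f_1 \in \sq{E(X)}$, writing $f_1 = s^2$ yields $F = E(X)[\sqrt{u}]$; \Cref{simple_ext}(1) applied to $t^2 - u$ over $E(X)$ gives $\ovl{\sqrt{u}} = \pm 1 \in Ev$ and $Fw = E(X)w$, hence again ruled by \Cref{RRT}. If $u \in \sq{E(X)}$, then $F = E(X)[\sqrt{f_1 u}] = E(X)[\sqrt{f_1}] = F'$, and taking $w' = w$ gives the conclusion trivially.

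The main case is $f, f_1, u \notin \sq{E(X)}$. Here the compositum $F'' := F \cdot F' = F[\sqrt{u}] = F'[\sqrt{u}]$ is biquadratic of degree four over $E(X)$. A direct check, using that a square root of $u$ in $F = E(X)[\sqrt{f}]$ (resp.\ in $F'$) would force $u$ or $f_1$ (resp.\ $u$ or $f$) to be a square in $E(X)$, shows $u \notin \sq{F}$ and $u \notin \sq{F'}$, so that $F''/F$ and $F''/F'$ are both genuine quadratic extensions. I then apply \Cref{simple_ext}(1) with $K = F$, $L = F''$, $g(t) = t^2 - u$, and the simple factor $t-1$ of the residue $t^2-1 \in Fw[t]$ to produce an extension $w''$ of $w$ to $F''$ with $w''(\sqrt{u}) = 0$, $\ovl{\sqrt{u}} = 1$, and $F''w'' = Fw$.

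Finally, set $w' = w''|_{F'}$; then $w'|_{E(X)} = w|_{E(X)}$. Applying \Cref{simple_ext}(1) once more with $K = F'$, $L = F''$ and the same polynomial $t^2 - u$ (whose reduction over $F'w'$ is still $t^2 - 1$ with $1$ a simple root), the same $w''$ yields $F''w'' = F'w'[\ovl{\sqrt{u}}] = F'w'$. Combining the two identities gives $Fw = F''w'' = F'w'$, as required. The only point requiring care is verifying the non-square conditions on $u$ in $F$ and $F'$ that justify the two applications of \Cref{simple_ext}; everything else is bookkeeping and the Ruled Residue Theorem.
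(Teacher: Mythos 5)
Your proof is correct, and it is a genuine self-contained alternative: the paper simply outsources the claim to \cite[Lemma 2.6]{RRTEC}, whereas you reduce everything to two applications of \Cref{simple_ext}(1) inside the compositum $F'' = F\cdot F' = E(X)[\sqrt{f_1},\sqrt{u}]$. The key observation, $w(u)=0$ and $\ovl{u}=1$ in $E(X)w$ with $1$ a simple root of $t^2-1$ (using $v(2)=0$), is exactly right, and your verification that $u\notin\sq{F}$ and $u\notin\sq{F'}$ in the main case is sound and necessary to justify that $t^2-u$ is the minimal polynomial of $\sqrt{u}$ over each of $F$ and $F'$, as required before invoking \Cref{simple_ext}.

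Two small remarks on how you quote \Cref{simple_ext}(1). In the second application (over $K=F'$) you want the conclusion $F''w''=F'w'$ for the \emph{specific} extension $w''$ already constructed, not merely for \emph{some} extension of $w'$; this is legitimate because the ``Moreover'' clause in the proof of \Cref{simple_ext}(1) is in fact established for \emph{every} extension $v'$ with $v'(\alpha)=0$ and $\ovl{\alpha}$ a simple root of $\ovl{f}$, which $w''$ satisfies. Likewise in your degenerate Case $2$, what you actually use is this same ``Moreover'' clause applied to the given $w$ on $F=E(X)[\sqrt{u}]$: since $w(\sqrt{u})=0$ is automatic (value groups are torsion-free) and $\ovl{\sqrt{u}}=\pm1\in Ev$ is a simple root of $t^2-1$, it gives $Fw=E(X)w[\ovl{\sqrt{u}}]=E(X)w$. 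With those clarifications the argument is complete. Compared with the paper's one-line citation, your route has the advantage of being self-contained and of making visible the mechanism (a ``trivially split'' square root with residue a nonzero unit root of $1$ in $Ev$), at the modest cost of a case analysis on the square classes of $f$, $f_1$, $u$ in $\scg{E(X)}$.
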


\begin{proof}
The statement follows from \cite[Lemma 2.6]{RRTEC}.   
\end{proof}

\begin{nota}Let $n\in\nat$, $a_0, \ldots, a_{n-1} \in E$ and $a_{n} \in \mg E$. 
Let $f(t) = \sum_{i=0}^n a_it^i \in E[t]$ be a polynomial and let $X\in F\setminus E$.
For $w \in \Omega_v(F)$, we define
$$ S_{f,X}(w) = \{i\in \{0, 1, \dots, n\} \mid  w(a_iX^i)= \min \{w(a_jX^j)| 0\leq j \leq n\} \}.$$
Note that $S_{f,X}(w)\subseteq \{0, 1, \ldots, n\} $.
\end{nota}

\begin{cor}
\label{minimum-attained-twice}
Assume that $v(2)=0$. Let $F/E$ is the function field of a hyperelliptic curve. 
Let $f \in E[t]$ and $X\in F\setminus E$ be such that  $F = E(X) [\sqrt{f(X)}]$. 
Let $w \in \Omega_v(F)$.
If $|S_{f,X}(w)|=1$ then $Fw/Ev$ is ruled.
\end{cor}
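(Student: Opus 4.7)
The approach is to apply \Cref{distinctvalues} to isolate the unique minimum term of $f(X)$ and to show that its second alternative forces $Fw/Ev$ to be ruled anyway, using the Ruled Residue Theorem on a cleverly identified rational function field.

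Concretely, write $f(t)=\sum_{i=0}^n a_it^i$ and let $i_0$ be the unique index realising $S_{f,X}(w)=\{i_0\}$. Setting $f_1(X)=a_{i_0}X^{i_0}$ and $f_2(X)=f(X)-f_1(X)$, the uniqueness together with the strong triangle inequality applied to $f_2(X)$ gives $w(f_1(X))<w(f_2(X))$ (the inequality holds trivially, with $w(f_2(X))=\infty$, in the degenerate case $f_2=0$, i.e.\ $f$ is a monomial). Then \Cref{distinctvalues} leaves only two possibilities: either $Fw/Ev$ is ruled and we are done, or $Fw=F'w'$ for some extension $w'$ of $w|_{E(X)}$ to $F':=E(X)[\sqrt{a_{i_0}X^{i_0}}]$.

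I would dispatch the second possibility by showing that $F'$ is itself ruled over $E$ for purely algebraic reasons. When $i_0=2m$ is even, $\sqrt{a_{i_0}X^{i_0}}=X^m\sqrt{a_{i_0}}$, so $F'=E(\sqrt{a_{i_0}})(X)$, the rational function field over $E(\sqrt{a_{i_0}})$. When $i_0=2m+1$ is odd, the element $Y:=X^{-m}\sqrt{a_{i_0}X^{i_0}}\in F'$ satisfies $Y^2=a_{i_0}X$, whence $X=Y^2/a_{i_0}\in E(Y)$ and therefore $F'=E(Y)$, a rational function field over $E$. In either case $F'=E'(Z)$ with $E'/E$ a finite (possibly trivial) extension and $Z$ transcendental over $E'$.

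To finish, observe that $w'$ is necessarily residually transcendental over $v':=w'|_{E'}$, since $F'w'=Fw$ is transcendental over $Ev$ and $E'v'/Ev$ is finite algebraic. The Ruled Residue Theorem \Cref{RRT} applied to $F'=E'(Z)$ and $v'$ then gives that $F'w'/E'v'$ is ruled, and finiteness of $E'v'/Ev$ upgrades this to $F'w'/Ev$ being ruled, whence $Fw/Ev$ is ruled. I do not anticipate a real obstacle: the whole argument is a single-step reduction via \Cref{distinctvalues} coupled with the elementary but crucial observation that $E(X)[\sqrt{aX^j}]$ with $a\in\mg E$ is, regardless of the parity of $j$, a rational function field over a finite extension of $E$.
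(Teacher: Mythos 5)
Your proof is correct and follows essentially the same route as the paper: decompose $f$ as $f_1 + f_2$ with $f_1 = a_{i_0}X^{i_0}$, apply \Cref{distinctvalues}, and reduce to showing that $E(X)[\sqrt{a_{i_0}X^{i_0}}]w'/Ev$ is ruled. The only difference is that you spell out the reason $E(X)[\sqrt{a_{i_0}X^{i_0}}]$ is rational over a finite extension of $E$ (splitting on the parity of $i_0$ and invoking \Cref{RRT}), whereas the paper simply cites this fact from the preamble paragraph before \Cref{u_algebraic_implies_ruled}; your handling of the degenerate monomial case $f_2=0$ is also a sensible explicit check that the paper leaves implicit.
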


\begin{proof}
Write $f(t ) = a_0+a_1t+ \ldots+ a_nt^n$ with $a_0, a_1, \ldots, a_{n-1} \in E$ and $a_n\in \mg E$.
Let $w\in \Omega_v(F)$. Assume that $S_{g,X}(w)=\{i_0\}$ for some $i_0\in \{0, 1, \dots, n\}$. Then $$w(a_{i_0}X^{i_0}) < w\left( \sum_{i=0,i\neq i_0}^{n} a_iX^i \right).$$ 
    By \Cref{distinctvalues}, either $Fw/Ev$ is ruled, or $Fw=E(X)[\sqrt{a_{i_0}x^{i_0}}] w'$ for some extension $w'$ of $w|_{E(X)}$ to $E(X)[\sqrt{a_{i_0}x^{i_0}}]$. Since $E(X)[\sqrt{a_{i_0}x^{i_0}}] w'/Ev$  is ruled, so is $Fw/Ev$.  
\end{proof}

\begin{prop}\label{overt}
Assume that $v(2)=0$. Let $F/E$ is the function field of a hyperelliptic curve.
Let $f(t) = \sum_{i=0}^n a_it^i\in E[t]$ of degree $n$, and $X\in F\setminus E$ be such that  $F = E(X) [\sqrt{f(X)}]$.
There are at most $n^2$ residually transcendental extensions $w$ of $v$ to $F$ such that 
$w(f(X)) =\min \{w(a_jX^j)| 0\leq j \leq n\}, $ and $Fw/Ev$ is non-ruled. 
\end{prop}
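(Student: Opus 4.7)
The plan is to partition the set of overt non-ruled extensions by the subset $S = S_{f,X}(w) \subseteq \{0, 1, \ldots, n\}$ and bound the contribution from each $S$. By \Cref{minimum-attained-twice}, every such $w$ has $|S| \geq 2$. Letting $i_0 = \min S$ and $\gamma = w(X)$, the relations $v(a_i) + i\gamma = v(a_{i_0}) + i_0\gamma$ for $i \in S$ uniquely determine $\gamma$ from $S$; in fact each admissible $S$ is the set of indices on an edge of the Newton polygon of $f$ with respect to $v$, so there are at most $n$ choices for $S$.

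Fix such an $S$ and set $e = e_S$, $c = c_S$, and $u = cX^e \in E(X)$, so $w(u) = 0$. The key claim is that for an overt non-ruled $w$ with $S_{f,X}(w) = S$, the residue $\overline{u}$ must be transcendental over $Ev$. Writing $f(X) = a_{i_0} X^{i_0} \tilde{f}$ with $w(\tilde f) = 0$ and $\overline{\tilde f} = p_{S,f,c}(\overline{u})$, I would rewrite $F$ modulo squares in $E(X)^\times$ into the form $E(X)[\sqrt{a u'}]$ with $a \in E^\times$ and $u' \in E(X)$, $w(u') = 0$, so that \Cref{u_algebraic_implies_ruled 0} would force $Fw/Ev$ ruled whenever $\overline{u'}$—and hence $\overline{u}$—is algebraic, contradicting non-ruledness. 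The case analysis is by the parities of $i_0$ and $e$: if $i_0$ is even, then $X^{i_0}$ is a square and $F \cong E(X)[\sqrt{a_{i_0}\tilde f}]$; if both $i_0$ and $e$ are odd, then $X \equiv cu$ modulo squares (using $X^e = c^{-1}u$ and that $X^{e-1}$ is a square), yielding $F \cong E(X)[\sqrt{a_{i_0}c \cdot u\tilde f}]$; finally, if $i_0$ is odd and $e$ is even, one writes $F = E(X)[\sqrt{a_{i_0} X \tilde f}]$ and applies \Cref{u_algebraic_implies_ruled} after renormalizing the radicand by a square $\phi^2 \in (E(X)^\times)^2$ chosen so that $w(\phi) = -v(a_{i_0})/2$, using $2w(Y) = v(a_{i_0}) + i_0\gamma \in wF = wE(X)$ (from \Cref{Conseq_RRT}) to ensure such $\phi$ exists.

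Given $\overline{u}$ transcendental, $w|_{E(u)}$ is forced to be the Gauss extension of $v$ with respect to $u$, which is unique by \Cref{gaussextdef}. Since $[E(X):E(u)] = e$ (as $X^e = c^{-1}u$), the Fundamental Inequality \Cref{funda_ineq} bounds the number of extensions of $w|_{E(u)}$ to $E(X)$ by $e \leq n$. For each such $w|_{E(X)}$, \Cref{Conseq_RRT} requires $[Fw:E(X)w] = 2$ and $wF = wE(X)$ whenever $Fw/Ev$ is non-ruled, so the Fundamental Inequality applied to the quadratic extension $F/E(X)$ admits at most one non-ruled extension to $F$ above any given $w|_{E(X)}$. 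Multiplying the bound of $n$ on the choices of $S$ by the bound of $n$ on non-ruled extensions per $S$ yields the claimed $n^2$.

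The main obstacle is the parity analysis in step two, especially the case $i_0$ odd, $e$ even, where direct square-class simplifications do not reduce $F$ to the form required by \Cref{u_algebraic_implies_ruled 0}, and one must instead normalize by a square in $E(X)^\times$ whose $w$-value lies outside $vE$, relying on the structure of the value group $wF = wE(X)$ inherited from the non-ruledness of $Fw/Ev$.
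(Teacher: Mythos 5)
Your decomposition is a genuinely different (and in principle attractive) route: you partition the overt set by the Newton-polygon edge $S$ rather than by $i_0=\min S$, and you pass to the subfield $E(u)$ with $u=c_SX^{e_S}$ of degree $e_S$ in $E(X)$, whereas the paper passes to $E(Z)$ with $Z=f(X)/(a_{i_0}X^{i_0})-1$ of degree $n$. Both give at most $n$ blocks; your route would in fact give the sharper count $\sum_S e_S\le n$, since $e_S$ divides the width of its edge and the widths sum to at most $n$, so the stated $n^2$ bound is safely recovered.

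The gap is in your case $i_0$ odd, $e_S$ even. You want a $\phi\in E(X)^\times$ with $w(\phi)=-v(a_{i_0})/2$. But the relation you invoke, $2w(Y)=v(a_{i_0})+i_0\gamma\in 2wE(X)$ with $i_0$ odd, gives $v(a_{i_0})\equiv -i_0\gamma\equiv\gamma\pmod{2wE(X)}$; so the required $\phi$ exists only if $\gamma\in 2wE(X)$, which is precisely what fails to be automatic when $e_S$ is even (so that $\gamma=w(X)$ has even order modulo $vE$). More fundamentally, even when such a $\phi$ exists, the residue $\ovl{a_{i_0}\phi^2\tilde f}$ need not be algebraic over $Ev$ — the proof of \Cref{u_algebraic_implies_ruled 0} in the paper explicitly shows that $\ovl{a\phi^2}$ is typically \emph{transcendental} in the non-ruled situation — so the hypothesis of \Cref{u_algebraic_implies_ruled} that the residue be algebraic is not met, and the conclusion does not follow from that proposition as you apply it.

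The paper avoids the parity of $e_S$ altogether by never introducing $u=c_SX^{e_S}$ into the ruledness test: it works with the single element $\tilde f=1+Z$, which automatically has $w(\tilde f)=0$ by overtness, splits only by the parity of $i_0$, and applies \Cref{u_algebraic_implies_ruled} or \Cref{u_algebraic_implies_ruled 0} with radicand $a_{i_0}X(1+Z)$ or $a_{i_0}(1+Z)$ respectively; in the transcendental case it counts via $E(Z)$ of degree $n$. Your Newton-polygon framing and the subfield $E(u)$ are a nice alternative for the counting step, but in the ruledness test you should test the residue of $\tilde f$ (which you already know equals $p_{S,f,c}(\ovl u)$, hence is algebraic whenever $\ovl u$ is), rather than work with $u$ directly — doing so removes the $e_S$-parity split and with it the problematic third case.
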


\begin{proof}
Let $w$ be a residually transcendental extension of $v$ to $F$.
Let $i_0 = \min \{S_{f,X}(w)\}$ and set 
$$Z = \frac {f(X) -a_{i_0}X^{i_0}}{a_{i_0}X^{i_0}}.$$
Then $f(X) = a_{i_0}X^{i_0} (1+ Z) $.  Note that $w(Z)\geq 0$.
Since $w(f(X))=w(a_{i_0}X^{i_0})$, we get that $w(1+Z) =0$.
 Thus  $ \ovl{1+Z} \neq 0$.
Suppose that $\ovl{Z}$ is algebraic over $Ev$.
Then $\ovl{1+Z}= 1+\ovl{Z}$ is algebraic over $Ev$.
If $i_0$ is even, then $F = E(X) [\sqrt{a_{i_0}(1+Z)}]$ and by \Cref{u_algebraic_implies_ruled 0}, $Fw/Ev$ is ruled.
If $i_0$ is odd, then $F = E(X) [\sqrt{a_{i_0}X(1+Z)}] $ and by \Cref{u_algebraic_implies_ruled}, $Fw/Ev$ is ruled.

Suppose now that $\ovl{Z}$ is transcendental over $Ev$. 
Then by \Cref{gaussextdef}, $w|_{E(Z)}$ is the Gauss extension with respect to $Z$. 
Since $[E(X): E(Z)] = n$, using the fundamental inequality, we get that there are at most $n$ extensions of $w|_{E(Z)}$ to $E(X)$. 
By \cite[Corollary 2.4]{RRTEC}, we get that there is a unique extension of $w|_{E(X)}$ to $F$ for which the residual field extension is non-ruled. 
Also, if $Fw/Ev$ is non-ruled then $|S_{f,X}(w)|\geq 2$ by \Cref{minimum-attained-twice}. Thus $i_0\neq n$ and we have $n$ many possibilities for $i_0$ and hence at most $n$ many possibilities for $Z$. 
Hence there are at most $n^2$ residually transcendental extensions $w$ of $v$ to $F$ such that $w(f(X))= \min\{w(a_i X^i) \mid 0\leq i \leq n\}$ and $Fw/Ev$ is non-ruled.
\end{proof}

Let $F/E$ be a regular function field of transcendence degree one. 
Let $n\in\nat$, $a_0, \ldots, a_{n-1} \in E$ and $a_{n} \in \mg E$.
Let $f(t) =\sum_{i=0}^n a_it^i \in E[t]$ be a polynomial and let $X\in F\setminus E$.
A valuation $w \in \Omega_v(F)$ is called \emph{overt with respect to the pair $(f, X)$} if $$w(f(X)) =\min \{w(a_jX^j)| 0\leq j \leq n\}, $$ and \emph{covert with respect to the pair $(f, X)$} otherwise.

Let $\Omega \subseteq \Omega_v(F)$. Then
\begin{itemize}
   \item $\mathsf{Overt}_{f,X}(\Omega) =\{ w\in \Omega\mid w \mbox{ is overt with respect to the pair } (f,X)\}$,  
 \item $\mathsf{Covert}_{f,X}(\Omega) =\{ w\in \Omega\mid w \mbox{ is covert with respect to the pair } (f,X)\}.$
   \end{itemize}
Note that $\Omega = \mathsf{Overt}_{f,X}(\Omega) \cup  \mathsf{Covert}_{f,X}(\Omega).$

Let  $\Omega_v^\ast(F)$ be the subset of $\Omega_v(F)$ consisting of those extensions $w$ for which $Fw/Ev$ is not ruled. Then
$$ \Omega_v^\ast (F) = \mathsf{Overt}_{f,X}(\Omega_v^\ast(F)) \cup  \mathsf{Covert}_{f,X}(\Omega_v^\ast(F)). $$

\Cref{overt} says that for $F =E(X)[\sqrt{f(X)}]$,
$|\mathsf{Overt}_{f,X}(\Omega_v^\ast(F))|\leq n^2$.

\section{Covert extensions}\label{4}
Let $E$ be a field with a valuation $v$. Let $F$ be a regular function field over $E$.
In this section, for a polynomial $g(t)\in E[t]$ and  $X\in F\setminus E$, we study covert extensions of $v$ to $F$ with respect to the pair $(g,X)$. 

Now we fix some notations, which are important for the rest of the article.

\begin{nota}
 Let $n\in\nat$, $a_0, \ldots, a_{n-1} \in E$ and $a_{n} \in \mg E$. 
Let $g(t) = \sum_{i=0}^n a_it^i \in E[t]$ be a polynomial and let $X\in F\setminus E$.

Let $S \subseteq \{0, 1, \ldots, n\}$. We set
$$\Omega_{S,g,X} =\{w \in \Omega_v (F) \mid S =S_{g,X}(w)\}.$$
Note that 
$$\Omega_v(F) = \bigcup_{S\subseteq\{0, 1,  \ldots,n\} }\Omega_{S,g,X}\,.$$
In view of \Cref{minimum-attained-twice}, if $|S| =1$, then $\Omega_{S,g,X}\cap \Omega^\ast_v(F) =\emptyset$. Since we want to bound $|\Omega^\ast_v(F)|$, from now on we focus on $S$ with $|S|\geq 2$.
\end{nota}


\begin{lem}\label{cS}
Let $n\in \nat$, $0 \leq i_0 < i_1 < \cdots < i_k\leq n$ and   $S = \{ i_0, i_1, \ldots, i_k\}$. Assume that $k\geq 1$.
Let $d$ be the greatest common divisor of $(i_1-i_0), \ldots, (i_k-i_0)$.
 Let $g(t)\in E[t]$ and $X \in F\setminus E$.  
There exists $b\in \mg E$ such that $w(bX^{d}) = 0$ for all $w \in \Omega_{S,g,X}$. 
\end{lem}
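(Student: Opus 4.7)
The plan is to turn the defining condition of $\Omega_{S,g,X}$ into linear relations on $w(X)$, and then to use B\'ezout's identity applied to $d=\gcd(i_1-i_0,\ldots,i_k-i_0)$ to isolate $d\cdot w(X)$ in $vE$ by means of an element $b\in\mg E$ that depends only on $g$ and $S$, not on any particular $w$.

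First, for any $w\in \Omega_{S,g,X}$ and any $j\in\{0,1,\ldots,k\}$ we have $i_j\in S_{g,X}(w)$, which in particular forces $a_{i_j}\neq 0$. Writing $e_j=i_j-i_0$ and $c_j=a_{i_j}a_{i_0}^{-1}\in \mg E$ for $j=1,\ldots,k$, the equality $w(a_{i_j}X^{i_j})=w(a_{i_0}X^{i_0})$ rearranges to
\[
e_j\cdot w(X)=-v(c_j),\qquad j=1,\ldots,k.
\]
These $k$ relations, which must hold simultaneously for every $w\in\Omega_{S,g,X}$, are precisely what will let me pin down $d\cdot w(X)$.

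Next, I would invoke B\'ezout: choose integers $\lambda_1,\ldots,\lambda_k\in\zz$ with $d=\sum_{j=1}^k \lambda_j e_j$. Since the $e_j$'s are determined by $S$, the coefficients $\lambda_j$ can (and will) be fixed once and for all from $S$ alone. Define
\[
b \;=\; \prod_{j=1}^{k} c_j^{\lambda_j} \;=\; \prod_{j=1}^{k} \bigl(a_{i_j}a_{i_0}^{-1}\bigr)^{\lambda_j} \;\in\; \mg E,
\]
which depends only on the polynomial $g$ and the set $S$.

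Finally, the verification is routine: for every $w\in\Omega_{S,g,X}$,
\[
v(b) \;=\; \sum_{j=1}^{k} \lambda_j v(c_j) \;=\; -\sum_{j=1}^{k} \lambda_j e_j\cdot w(X) \;=\; -d\cdot w(X),
\]
so $w(bX^d)=v(b)+d\cdot w(X)=0$, as required. I do not foresee any genuine obstacle; the only point worth emphasizing is that the B\'ezout coefficients $\lambda_j$ are chosen uniformly from the combinatorial datum $S$, which is precisely what guarantees that a single $b\in\mg E$ works for all $w\in\Omega_{S,g,X}$ at once.
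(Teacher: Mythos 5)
Your proof is correct and follows essentially the same approach as the paper: both derive the relations $(i_j-i_0)\,w(X)=-v(a_{i_j}a_{i_0}^{-1})$ from the defining equalities, then apply B\'ezout to the differences (the paper normalizes to $l_j=(i_j-i_0)/d$ with $\gcd=1$, you work with $e_j=i_j-i_0$ directly; the resulting element $b=\prod_j(a_{i_j}a_{i_0}^{-1})^{\lambda_j}$ is the same). No discrepancies.
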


\begin{proof}
 Let $g(t) = \sum_{i=0}^ma_it^i$.  
Let $w \in\Omega_{S,g,X}$. Then $$S_{g,X}(w) = S = \{ i_0, i_1, \ldots, i_k\}.$$ 
For $1\leq j\leq k$, set $l_j = \frac{i_j-i_0}{d}$. We have 
$w(a_{i_j}X^{i_j}) =w(a_{i_0}X^{i_0})$
and hence  $$v  (a_{i_j} a_{i_0}^{-1}) = w (X^{i_0-i_j}) = - l_{i_j}w (X^{d}).$$
Since the greatest common divisor of $l_1, \ldots, l_k$ is~$1$, there are 
$m_1, \ldots, m_k \in\zz$ such that $l_1m_1 + \cdots+ l_km_k =1$. Then $$w(X^{d}) = -\sum_{j=1}^k m_jv(a_{i_j} a_{i_0}^{-1} ) \in vE.$$ 
In particular, for $b = a_{i_0}^{-(m_1+\ldots+m_k)} \prod_{j=1}^k a_{i_j}^{m_j}$, we have $w(bX^{d}) =0$.
\end{proof}

Let $S, b,d$ be as in \Cref{cS}.
Let $e_S$ be the order of $\frac{v(b)}{d}$ in $vE^\mathbb Q/vE$,  where $vE^\mathbb Q$ is the division closure of the value group $vE$. 
Note that, since for all $w\in \Omega_{S,g,X}$, $w(X) = -\frac{v(b)}{d}$, hence $e_S$ is also the order of $w(X)$ in $wF/vE$.
Thus there exists $c \in \mg E$ such that $w(cX^{e_S}) =0$, for all $w \in \Omega_{S,g,X}$. 
We set $\mathcal C_g(S) = c\mg{\mc O}_v$.

Let $c \in \mathcal C_g(S)$.
Let $1\leq j\leq k$.
Since $e_S$ is the order of $\frac{v(b)}{d}$, $e_S$ divides $d$, hence $e_S$ divides $(i_j-i_0)$.
Set $n_j =\frac{i_j-i_0}{e_S} \in \nat$.
We have $v(a_{i_j} a_{i_0}^{-1}) = n_jv(c) $, that is, $v(a_{i_j} a_{i_0}^{-1}c^{-n_j}) =0 $.
\begin{de}\label{residue-poly}
    For $S$,  $g$, and $c$ as above, we associate a polynomial over $Ev$ as follows: 
$$p_{S,g,c}(t) = \sum_{j=0}^{k} \ovl{a_{i_j} a_{i_0}^{-1}c^{-n_j}} t^{n_j}. $$
\end{de}
Note that $\deg(p_{S,g,c}(t)) = n_k$.

\begin{lem}\label{residue-root}
Let $S \subseteq \{ 0, 1, \ldots, n \}$ with $|S|\geq 2$. 
Let $n\in\nat$ and $a_0, \ldots, a_{n} \in E$. 
Let $g(t) = \sum_{i=0}^n a_it^i \in E[t]$ and let $X\in F\setminus E$. 
Let $w\in \Omega_{S,g,X}$ and $c\in \mathcal C_g(S)$. 
Then $\deg(p_{S,g,c}(t))\leq \max S$.
Furthermore, 
 $w \in \mathsf{Covert}_{g,X}(\Omega_v(F)) $  if and only if $\ovl{cX^{e_S}}$ in $Fw$ is a root of $p_{S,g,c}(t)$.  
\end{lem}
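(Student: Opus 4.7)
The plan is to extract the dominant monomial $a_{i_0}X^{i_0}$ from $g(X)$ and read off what covertness means at the residue level. The degree bound is immediate from the definitions: by construction $\deg(p_{S,g,c}) = n_k = (i_k - i_0)/e_S$, and since $e_S \geq 1$ and $i_0 \geq 0$ one has $n_k \leq i_k = \max S$.

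For the equivalence, set $M = \min\{w(a_j X^j) \mid 0 \leq j \leq n\}$. Because $w \in \Omega_{S,g,X}$, we have $w(a_i X^i) = M$ for $i \in S$ and $w(a_i X^i) > M$ for $i \notin S$. Using $i_0 = \min S$ as the pivot, write
\[
\frac{g(X)}{a_{i_0} X^{i_0}} \;=\; \sum_{i=0}^{n} a_i a_{i_0}^{-1} X^{i - i_0}.
\]
Every summand lies in $\mc{O}_w$, and the summands with $i \notin S$ have strictly positive $w$-value and therefore vanish modulo $\mfm_w$. For $i = i_j \in S$ we use $i_j - i_0 = e_S n_j$ to rewrite
\[
a_{i_j} a_{i_0}^{-1} X^{i_j - i_0} \;=\; \bigl(a_{i_j} a_{i_0}^{-1} c^{-n_j}\bigr)\bigl(c X^{e_S}\bigr)^{n_j},
\]
where both factors have $w$-value zero, the first by the computation $v(a_{i_j} a_{i_0}^{-1}) = n_j v(c)$ recorded in the paragraph before \Cref{residue-poly}, and the second by the definition of $\mathcal{C}_g(S)$. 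Passing to residues gives
\[
\overline{\frac{g(X)}{a_{i_0} X^{i_0}}} \;=\; \sum_{j=0}^{k} \ovl{a_{i_j} a_{i_0}^{-1} c^{-n_j}}\,\ovl{cX^{e_S}}^{\,n_j} \;=\; p_{S,g,c}\!\bigl(\ovl{cX^{e_S}}\bigr).
\]

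Now $w$ is covert with respect to $(g,X)$ exactly when $w(g(X)) > M$, equivalently when $w\bigl(g(X)/(a_{i_0} X^{i_0})\bigr) > 0$, equivalently when the residue just computed is zero in $Fw$. This is precisely $p_{S,g,c}\bigl(\ovl{cX^{e_S}}\bigr) = 0$, which completes the equivalence. The only nontrivial bookkeeping is the verification that each coefficient $a_{i_j} a_{i_0}^{-1} c^{-n_j}$ is a unit of $\mc{O}_v$, but this is built into the definition of $e_S$ and $c \in \mathcal{C}_g(S)$, so I do not expect any real obstacle.
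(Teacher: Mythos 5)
Your proof is correct and follows essentially the same route as the paper: divide $g(X)$ by the pivot monomial $a_{i_0}X^{i_0}$, observe that the summands indexed by $i\notin S$ have strictly positive $w$-value while those indexed by $i\in S$ are units, and reassemble the surviving residues into $p_{S,g,c}(\ovl{cX^{e_S}})$ via $i_j-i_0=n_je_S$. The degree bound and the equivalence chain (covert $\Leftrightarrow$ $w(g(X)/(a_{i_0}X^{i_0}))>0$ $\Leftrightarrow$ residue vanishes) mirror the paper's argument step for step.
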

\begin{proof}
Let $S =\{i_0, i_1, \ldots, i_k\}$ be such that $i_0<i_1< \cdots< i_k$.
By the definition of $p_{S,g,c}(t)$, we have that $$\deg(p_{S,g,c}(t)) = n_k = \frac{i_k-i_0}{e_S}  \leq i_k = \max S.$$
We have that
$$(a_{i_0}X^{i_0})^{-1}g(X)=  \sum_{i=0}^n (a_ia_{i_0}^{-1}) X^{i-i_0}  .$$
Note that  $w(g(X)) > \min \{w (a_iX^i)\mid 0\leq i\leq n\}$ if and only if 
\begin{equation}\label{1} 
\ovl{\sum_{i=0}^n (a_ia_{i_0}^{-1}) X^{i-i_0}} =0 \mbox{ in } Fw .
\end{equation}
By the definition of $S_{g,X}(w)$, we get that $w ((a_ia_{i_0}^{-1}) X^{i-i_0}) >0$ for $i \notin S_{g,X}(w)$, 
and $w((a_ia_{i_0}^{-1}) X^{i-i_0}) =0$ for $i \in S_{g,X}(w)$.  
Thus \eqref{1} holds if and only if 
$$\sum_{i\in S_{g,X}(w)} \ovl{(a_ia_{i_0}^{-1}) X^{i-i_0}} =0 \in Fw.$$
Let $c\in \mathcal C_g(S)$. Then $w(cX^{e_S} ) =0$. Now the statement follows, since 
$$p_{S,g,c}(\ovl{cX^{e_S}}) = \sum_{j=0}^{k} \ovl{a_{i_j} a_{i_0}^{-1}c^{-n_j}} (\ovl{cX^{e_S}})^{n_j} = \sum_{i\in S_{g,X}(w)} \ovl{(a_{i}a_{i_0}^{-1}) X^{i-i_0}},$$
where $i_j-i_0 = n_je_S$ for $n_j \in \nat$.
\end{proof}

Let $\mathsf{Irr fac} (p_{S,g,c})$ be the set of all monic irreducible factors of $p_{S,g,c}(t)$ in $Ev[t]$. 
In view of \Cref{residue-root}, for $w \in  \mathsf{Covert}_{g,X}(\Omega_{S,g,X})$, we have that  $\ovl{cX^{e_S}}$ in $Fw$ is algebraic  over $Ev$ and 
its minimal polynomial over $Ev$ is in $\mathsf{Irr fac} (p_{S,g,c})$.
Note that, for $c,c' \in \mathcal C_g(S)$, the set of multiplicities of all the roots of $p_{S,g,c}$ and the set of multiplicities of all the roots of  $p_{S,g,c'}$ are equal and is denoted by $\mathsf{Mult}(S,g,X)$.

For $c \in \mathcal C_g(S)$ as defined in the paragraph after \Cref{cS} and $q \in \mathsf{Irr fac} (p_{S,g,c})$, we set
$$\Omega_{S, g, X,q} = \{w \in \Omega_{S,g,X} \mid q(\ovl{cX^{e_S}}) =0 \mbox{ in } Fw\}.$$

For $g(t)\in E[t]$ a polynomial of degree $n$ and $X\in F\setminus E$, we denote by $\mathcal S (g, X)$ the collection of all subsets $S \subseteq \{0,1, \ldots, n\}$ such that  $|S| \geq 2$ and $\Omega_{S,g,X} \neq \emptyset$.
Then $\mathcal S (g, X)$ is finite. From \Cref{residue-root}, we have
\begin{equation}\label{double-union}
 \mathsf{Covert}_{g,X}(\Omega_{S,g,X} ) =  \bigcup_{ q \in  \mathsf{Irr fac}(p_{S,g,c}(t))} \Omega_{S,g,X,q}.   
\end{equation}

\section{Residue multiplicities reduction}\label{5}
Let $E$ be a field with a valuation $v$. Let $F$ be a regular function field over $E$.
Let $f(t)\in E[t]$ be a polynomial of degree $n$ and  $X\in F\setminus E$.
Recall that for $S \subseteq \{0,1,\ldots, n\}$, the set $\Omega_{S,f,X}$ is the set of all residually transcendental extensions 
of $v$ to $F$ with $S_{f,X}(w) = S$.
In this section, we consider a set $S \in  \mathcal S (f, X) $, that is, a set $S \subseteq \{0,1,\ldots, n\}$ such that
$|S|\geq 2$ and $\Omega_{S,f,X} \neq \emptyset$, and calculate the associated polynomial $p_{S,f,c}$. 
For $q \in \mathsf{Irrfac}(p_{S,f,c})$, we will construct some field extension $F'/F$ with a 
 subset $\Omega $ of $\Omega_v(F')$ that has the property 
that it consists of exactly one extension $w'$ corresponding to each valuation $w \in \Omega_{S,f,X,q}$ and for this $w'$, we have $F'w' =Fw$.

\begin{lem}\label{purelyinseppoly}
Let $b_0, \ldots, b_n \in \mc O_v$, $b_n\neq 0$, be such that  $p(t) = b_nt^n+ \ldots + b_1t + b_0$ is a primitive polynomial and $\ovl{p}(t) \in Ev[t]$ is purely inseparable of degree $r$ with $\beta \in Ev$ as its root. 
Assume that $\car{Ev}=0$ or $\car{Ev}>\deg(p(t))$. 
Then $v(b_r)=0$, $v(b_i) >0$ for $i>r$, and $\beta=-  \frac{ \ovl{b_{r-1}}}{r\ovl{b_r}}$. 
In case $\beta\neq 0\in Ev$, we also have $v(b_i)=0$ for all $i$, $0\leq i \leq r$. 
\end{lem}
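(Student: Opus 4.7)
The plan is to read off most of the statement directly from the definition of the residue polynomial and then expand a single binomial identity to pin down $\beta$. First I would observe that by the very definition of $r=\deg\ovl{p}$, the coefficient $\ovl{b_r}$ is the leading term of $\ovl{p}$, so $\ovl{b_r}\neq 0$ and hence $v(b_r)=0$, while for $i>r$ the coefficient $\ovl{b_i}$ vanishes, which gives $v(b_i)>0$. This disposes of the first two assertions and requires no use of the purely inseparable hypothesis yet.

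Next I would unpack what ``purely inseparable of degree $r$ with root $\beta$'' gives us: all roots of $\ovl{p}$ in an algebraic closure of $Ev$ coincide with $\beta$, so
\[
\ovl{p}(t) \;=\; \ovl{b_r}\,(t-\beta)^r
\]
in $Ev[t]$. Expanding by the binomial theorem yields $\ovl{b_k}=\ovl{b_r}\binom{r}{k}(-\beta)^{r-k}$ for every $0\leq k\leq r$. Here the characteristic hypothesis enters in a crucial but very light way: since $\car{Ev}=0$ or $\car{Ev}>n\geq r$, each binomial coefficient $\binom{r}{k}$ is invertible in $Ev$. In particular comparing the $t^{r-1}$-coefficient gives $\ovl{b_{r-1}}=-r\,\ovl{b_r}\,\beta$, and since $r\ovl{b_r}$ is a unit, we may divide to obtain the stated formula $\beta=-\ovl{b_{r-1}}/(r\,\ovl{b_r})$.

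For the last clause, assume $\beta\neq 0$ in $Ev$. Then in the identity $\ovl{b_k}=\ovl{b_r}\binom{r}{k}(-\beta)^{r-k}$ all three factors on the right are units of $Ev$ (using $\ovl{b_r}\neq 0$, the invertibility of $\binom{r}{k}$ just established, and $\beta\neq 0$), so $\ovl{b_k}\neq 0$ and consequently $v(b_k)=0$ for every $0\leq k\leq r$. Combined with what was shown in the first paragraph for indices $i>r$, this completes the proof.

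I do not expect any serious obstacle; the only subtle point is being explicit that ``purely inseparable of degree $r$'' in this polynomial sense means precisely $\ovl{p}=\ovl{b_r}(t-\beta)^r$, so that the binomial expansion is legitimate, and that the characteristic hypothesis is used only to invert $\binom{r}{k}$ (in particular to make sense of dividing by $r$ when solving for $\beta$).
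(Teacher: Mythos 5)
Your proposal is correct and follows essentially the same route as the paper's own proof: extract $v(b_r)=0$ and $v(b_i)>0$ for $i>r$ from the degree of $\ovl{p}$, write $\ovl{p}=\ovl{b_r}(t-\beta)^r$ and compare coefficients via the binomial theorem, specialize to the $t^{r-1}$ coefficient to solve for $\beta$, and then observe all lower coefficients are units when $\beta\neq 0$. In fact your write-up is slightly cleaner in one small respect: the paper's displayed identity $\ovl{b_i}=\ovl{b_r}(-1)^{r-i}\binom{r}{r-i}\beta^{r-i}\neq 0$ asserts nonvanishing before the hypothesis $\beta\neq 0$ has been invoked, whereas you correctly defer the nonvanishing conclusion to the final paragraph where $\beta\neq 0$ is assumed.
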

\begin{proof}
 Since $\deg{(\ovl{p}(t))} =r$, we have $\ovl{p}(t) = \ovl{b_r} t^r+ \ldots + \ovl{b_0} \in Ev[t]$. 
 Thus  $v(b_i)>0$ for $i>r$.
Since $\ovl{p}(t) \in Ev[t]$ is purely inseparable with $\beta\in Ev$ as its root, we get $\ovl{p}(t) = \ovl{b_r}(t- \beta)^r$ with $\ovl{b_r}\neq 0 \in Ev$. 
Furthermore, since  $\car{Ev}=0$ or $\car{Ev}>n$, using binomial expansion and comparing the coefficient of $t^i$ for $0 \leq i\leq r-1$, we obtain that  
\begin{equation}\label{eq1}
  \ovl{b_i} = \ovl{b_r} (-1)^{r-i} \binom{r}{r-i}\beta^{r-i} \neq 0.  \end{equation}
Writing \ref{eq1} for $i =r-1$, we get that $\ovl{b_{r-1}} =\ovl{b_r} (-1) r \beta$, which implies that 
$$\beta=-\frac{\ovl{b_{r-1}}}{r\ovl{b_r}}.$$  
In case $\beta\neq 0\in Ev$, we further have $v(b_i)=0$ for all $i$, $0\leq i \leq r$ by \ref{eq1}. 
\end{proof}

\begin{lem}\label{purelyinseparable}
 Let $f(t) =\sum_{i=0}^na_it^n \in E[t]$ with $a_n\neq 0$ and $X \in F\setminus E$. 
Assume that $\car{Ev}=0$ or $\car{Ev}>\deg{f}$.  
Let $S =\{0,1,\ldots,r\} \in \mathcal{S}(f,X)$ and $c\in \mathcal C_f(S)$. 
Assume that the polynomial $p_{S,f,c}(t)$ is purely inseparable of degree $r$ and let $q$ be the irreducible factor of $p_{S,f,c}(t)$. Let $w \in \Omega_{S,f,X,q}$. Then 
\begin{enumerate}
\item Then $\ovl{\left( \frac{ra_rX}{a_{r-1}}\right)} = -1$ in $Fw$.

\item Let $b_i =\frac{a_i}{a_0c^i}$ and $\theta =cX$. The polynomial 
    $$C(t) = \sum_{i=0}^{n-r+1} \binom{r-1+i}{i}b_{r-1+i}\;t^i \in \mc O_v[t]$$ 
is primitive and $\ovl{C}(t)$ is linear and $\ovl{C}(\ovl{\theta}) =0$ in $Fw$.

\item There is an irreducible primitive factor $Q(t)$ of $C(t)$ such that $\ovl{Q}(t) = \ovl{C}(t)$. Let $F_Q = F(t)/(Q(t)), \alpha = t + (Q(t))$ and $E_Q =E[\alpha]$. There is a unique extension $w_Q$ of $w$ to $F_Q$ such that $w_Q(\alpha) =0$.
For this $w_Q$, $\ovl{\alpha} =\ovl{\theta}$ in $F_Qw_Q$. In particular, $w_Q(\theta -\alpha) > 0$.

\item Let $X_1 = \theta - \alpha$, $g(t) =a_0^{-1}f(c^{-1}(t+\alpha)$ and $S_1 = S_{g,X_1}(w_Q)$. 
Then $\max{S_1} \leq r$.
 Suppose $| S_1| \geq 2 $. Then $\max ~\mathsf{Mult(S_1,g, X_1)} <r$.
 

\item For the set $\Omega = \{w' \in \Omega_v(F_Q) \mid w'|_F \in \Omega_{S, f, X,q}, 
w'(\alpha) =0\}$
we have $| \Omega_{S, f, X,q} \cap \Omega_v^\ast(F)| = |\Omega \cap \Omega_v^\ast(F_Q)|$. 
Furthermore, $$\Omega \cap \Omega_v^\ast(F_Q) \subseteq  
\bigcup_{\substack{{S_1 \in \mathcal S(g,X_1)}\\S_1 
\subseteq \{0,1, \ldots,r\}}}\Omega_{S_1, g,X_1}.$$ 
\end{enumerate}
\end{lem}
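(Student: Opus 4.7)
\noindent \emph{Proof plan.} The first two parts unfold by direct computation starting from $b_i = a_i/(a_0 c^i)$. Because $S = \{0,1,\dots,r\}$ forces $e_S = 1$, the condition $c \in \mc C_f(S)$ gives $v(b_i) = 0$ for $i \leq r$ and $v(b_i) > 0$ for $i > r$, so $p_{S,f,c}(t) = \sum_{i=0}^r \ovl{b_i}\, t^i$. The purely inseparable shape of $p_{S,f,c}$ then lets \Cref{purelyinseppoly} identify $\beta := \ovl{\theta}$ as $-\ovl{b_{r-1}}/(r\ovl{b_r})$; re-expressing via $a_i$ and $c$ yields $\ovl{ra_rX/a_{r-1}} = -1$, establishing (1). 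For (2), the same valuation bookkeeping shows $C \in \mc O_v[t]$ is primitive (its constant term $b_{r-1}$ is a unit) and $\ovl{b_{r-1+i}} = 0$ for all $i \geq 2$, so $\ovl{C}(t) = \ovl{b_{r-1}} + r\ovl{b_r}\, t$ is linear with root $\beta = \ovl{\theta}$.

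For (3), I decompose $C$ in $E[t]$ into monic irreducible factors and replace each by a primitive $\mc O_v[t]$-representative via Gauss's lemma; the linearity of $\ovl C$ forces exactly one primitive factor to have linear residue while the others reduce to units, and rescaling that factor by a suitable unit of $\mc O_v$ produces an irreducible primitive factor $Q \mid C$ with $\ovl{Q} = \ovl{C}$. Since $F/E$ is regular, \Cref{regularfunctionfields} ensures $Q$ remains irreducible over $F$, so $F_Q$ is a field. I then apply \Cref{simple_ext} with base field $F$, valuation $w$, and factor $Q$: part~(1) produces an extension $w_Q$ with $w_Q(\alpha) = 0$ and $\ovl{\alpha} = \beta = \ovl{\theta}$, while part~(3) (linearity of $\ovl Q$) delivers uniqueness; the inequality $w_Q(\theta - \alpha) > 0$ is immediate from $\ovl{\alpha} = \ovl{\theta}$.

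Part (4) is where the whole engine lives. Expanding $g(t) = \sum_{i=0}^n b_i(t+\alpha)^i$ and reducing modulo $\mfm_{w_Q}$ yields $\ovl{g}(t) = \ovl{p_{S,f,c}}(\beta + t) = \ovl{b_r}\, t^r$, so $\ovl{g_j} = 0$ for $j \neq r$ and $\ovl{g_r} = \ovl{b_r} \neq 0$. The crucial identity is
$$g_{r-1} \;=\; \sum_{k \geq 0} \binom{r-1+k}{k}\, b_{r-1+k}\, \alpha^k \;=\; C(\alpha),$$
which vanishes because $Q \mid C$ in $E[t]$ and $Q(\alpha) = 0$; thus $g_{r-1} = 0$ as an element of $E_Q$, so $r-1 \notin S_1$. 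Writing $\mu = w_Q(X_1) > 0$, we have $w_Q(g_rX_1^r) = r\mu$, while for $j > r$ the estimate $w_Q(g_jX_1^j) = w_Q(g_j) + j\mu > r\mu$ holds (as $w_Q(g_j) > 0$), giving $\max S_1 \leq r$. If $\max \mathsf{Mult}(S_1, g, X_1) \geq r$ then $\deg p_{S_1,g,c_1} \geq r$, which combined with $\max S_1 \leq r$ forces $\min S_1 = 0$, $\max S_1 = r$, $e_{S_1} = 1$, and $p_{S_1,g,c_1}(t) = u(t-\gamma)^r$ for some $u \in (E_Qv_Q)^\times$ and $\gamma$ in an algebraic closure of $E_Qv_Q$. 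But the coefficient of $t^{r-1}$ in $u(t-\gamma)^r$ equals $-ur\gamma$, which must vanish since $r-1 \notin S_1$; as $r$ is invertible in $Ev$, this forces $\gamma = 0$ and hence $p_{S_1,g,c_1}(t) = u t^r$, contradicting $0 \in S_1$. So $\max \mathsf{Mult}(S_1, g, X_1) < r$.

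Finally, (5) drops out: uniqueness of $w_Q$ from (3) gives a bijection between $\Omega_{S,f,X,q}$ and $\Omega$, and $F_Q w_Q = Fw$ (from \Cref{simple_ext}, since $\ovl{\alpha} = \beta \in Ev$ is a simple root of the linear $\ovl{Q}$) shows this bijection preserves the non-ruled property, yielding the cardinality equality; the inclusion follows from (4) together with \Cref{minimum-attained-twice}, which forces $|S_1| \geq 2$ whenever $w_Q$ is non-ruled, placing $S_1 \in \mc S(g, X_1)$. The principal obstacle is the multiplicity drop in (4): one must notice that the translation $X \mapsto c^{-1}(X_1 + \alpha)$, which lifts the residue root $\beta$ to the element $\alpha \in E_Q$, makes the sub-leading coefficient $g_{r-1}$ vanish not merely modulo $\mfm_{w_Q}$ but exactly in $E_Q$, via $g_{r-1} = C(\alpha)$. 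It is this rigid vanishing, powered by $Q \mid C$, that blocks a surviving root of multiplicity $r$ in $p_{S_1, g, c_1}$.
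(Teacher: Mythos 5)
Your proof is correct and follows essentially the same path as the paper: parts (1)--(3) are the same computation plus an appeal to \Cref{simple_ext}, and in (4) you identify the same pivotal fact that $g_{r-1} = C(\alpha) = 0$ holds exactly in $E_Q$ (not just residually), then derive the same contradiction from the vanishing of the $t^{r-1}$-coefficient of a would-be purely inseparable $p_{S_1,g,c_1}$ of degree $r$ together with the invertibility of $r$ and the nonzero constant term forced by $0\in S_1$. The only cosmetic difference is that you make the coefficient comparison $-ur\gamma=0 \Rightarrow \gamma=0$ explicit, whereas the paper cites \Cref{purelyinseppoly} a second time to reach the same contradiction; the intermediate computation $\ovl g(t)=\ovl{b_r}t^r$ you include is illuminating but not strictly needed, as only $w_Q(c_i)>0$ for $i>r$ and $w_Q(c_r)=0$ are used to bound $\max S_1$.
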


\begin{proof} $(1)$ Since $S= \{0,1, \ldots, r\}$, $e_S =1$. Let $c\in \mathcal C_f(S)$. Then $w(cX)=0$ and since $w\in \Omega_{S,f,X,q}$, we have that $q(\ovl{cX}) =0$ in $Fw$.
By the definition of $p_{S,f,c}$ we get that  $p_{S,f,c}(t) = \sum_{i=0}^r (\ovl{\frac{a_i}{a_0c^i}})t^i \in Ev[t]$. 

For $0 \leq i\leq n$, set $b_i = \frac{a_i}{a_0c^i}$. 
Let $p(t) = a_0^{-1}f(c^{-1}t) =\sum_{i=0}^n b_it^n$.
We get $p_{S,f,c}(t) =\ovl{p}(t)$ in $Ev[t]$.
By \Cref{purelyinseppoly}, we get that 
$\ovl{cX} = -\ovl{\left(\frac{b_{r-1}}{rb_r} \right)}=    -\ovl{ \left(\frac{ca_{r-1}}{ra_r}\right)}$ in $Fw$. Thus $\ovl{\left(\frac{ra_rX}{a_{r-1}}\right)} =-1$ in $Fw$.

$(2)$ Set $\theta =cX$. Consider the polynomial 
$$C(t) = \sum_{i=0}^{n-r+1} \binom{r-1+i}{i}b_{r-1+i}t^i.$$ 
Since $\car{Ev} =0$ or $\car{Ev}>n$ and $b_j\in \mc O_v$ for $r-1 \leq j \leq n$, we have $C(t)\in \mc O_v[t].$ 
Since $\ovl{p}(t) = \sum_{i=0}^n\ovl{b_i}t^n$ is purely inseparable polynomial of degree $r$ it follows by \Cref{purelyinseppoly} that $v(b_j)>0$ for $j>r$, $v(b_r) =v(b_{r-1}) =0$ and $v(r) =0$. Hence the residue polynomial is given by
$$\ovl{C}(t) = r \ovl{b_r} t + \ovl{b_{r-1}} = r \ovl{b_r}\left(t +\ovl{\frac{b_{r-1}}{r\ovl{b_r}}}\right) = r\ovl{b_r}(t-\ovl{\theta}) \in Fw.$$ 

$(3)$ Since $\ovl{C}(t)$ is linear, by \Cref{simple_ext}(1) there is a primitive irreducible factor $Q(t)$ of $C(t)$ such that $\ovl{Q}(t)=\ovl{C}(t)$.
By \Cref{simple_ext}$(3)$ there is a unique extension $w_Q$ of $w$ to $F_Q$ such that  $w_Q(\alpha)=0$, $\ovl{\alpha}= -\frac{\ovl{b_{r-1}}}{r\ovl{b_r}}$ in $F_Qw_Q$ and $F_Qw_Q=Fw$. Using $(2)$, we note that $$\ovl{\alpha} = - \frac{\ovl{b_{r-1}}}{r\ovl{b_r}} = \ovl{\theta}$$ in $F_Qw_Q$. Thus $w_Q(\theta -\alpha) >0$.

$(4)$ We have $ w_Q(X_1) =  w_Q(\theta -\alpha) >0$.  
For $0 \leq j\leq n$, set 
$$c_j =\sum_{k=j}^n b_k \binom{k}{j} \alpha^{k-j} \in E[\alpha].  $$ 
Let $p(t) = a_0^{-1}f(c^{-1}t)$ as in proof of $(1)$. Then
$g(t) = p(t+\alpha)= \sum_{j=0}^n c_j t^j \in E[\alpha][t].$
Since $w_Q(\alpha)=0$ and  $b_i\in \mc O_v$, we get that $w_Q(c_i)\geq 0$. 

By \Cref{purelyinseppoly}, we have that, for $r <i \leq n$, $v(b_i)>0 $ and $v(b_r) =0$.
Using this with  the definition of $c_i$ we get that for $r <i \leq n$, 
$v(c_i) >0$  and  $v(c_r) = v(b_r) =0 $.
This shows that $g(t)$ is primitive.
Since  $w_Q(X_1)>0$, for $i>r$, we observe that 
$$w_Q (c_iX_1^i ) >  w_Q(X_1^i) > w_Q(X_1^r ) = w_Q(c_rX_1^r).$$
Then $\min \{ w_Q(c_iX_1^i)\mid 0\leq i \leq n\} \leq w_Q(c_r X_1^r)$, 
whereby for $S_1= S_{g,X_1}(w_Q)$, we have $\max S_1 \leq r$.
Since $Q(\alpha) =0$ and  $Q(t)$ is a factor of $C(t)$, we get that $c_{r-1} = C(\alpha) =0$. 

Let $S_1 = \{i_0, \ldots, i_k\}  $ with $0 \leq i_0 < i_1< \cdots <i_k \leq r$. Assume that $|S_1|\geq2$. Let $e_{S_1}$ be the order of $w_Q(X_1)$ in $w_QF_Q/w_QE_Q$ and let $c_{S_1} \in \mathcal C_g(S_1)$.
Then $w_Q(c_{S_1}X_1^{e_{S_1}})=0$ and
$$p_{g,X_1,c_{S_1}}(t) = \sum_{j=0}^k \ovl{(c_{i_j}c_{i_0}^{-1} c_{S_1}^{-n_j})} t^{n_j}\in E[\alpha]w_Q \mbox{ where }  n_j = \frac{i_j-i_0}{e_{S_1}}$$
Since $ i_k = \max S_1  \leq r$ we get that $\deg(p_{g,X_1,c_{S_1}}(t))  \leq r$ . 
Assume that $r = \deg (p_{g,X_1,c_{S_1}}(t))$. Then $i_k =r$.
Since $c_{r-1} =0$, $\car{Ev} =0$ or $\car{Ev} > r$, and 
$p_{g,X_1,c_{S_1}}(t)$ has a non-zero root in $Ev$, we get by \Cref{purelyinseppoly} 
that $p_{g,X_1,c_{S_1}}(t)$ is not purely inseparable.

$(5)$ First statement follows by $(3)$. 
Let $w'\in \Omega \cap \Omega^{\ast}_v(F_Q)$. 
We have by $(3)$ and $(4)$, that  for $S_1=S_{g,X_1}(w')$, $S_1\subseteq \{0,1,\ldots, r\}$. Since $F'w'/Ev$ is non-ruled, we have $|S_1|\geq 2$ by \Cref{minimum-attained-twice}. Hence $w' \in \bigcup_{\substack{{S_1 \in \mathcal S(g,X_1)}\\S_1 
\subseteq \{0,1, \ldots,r\}}}\Omega_{S_1, g,X_1}.$
\end{proof}

\begin{lem}\label{separable}
Let $f(t) \in E[t]$ and $X \in F\setminus E$. 
Assume that $\car{Ev}=0$ or $\car{Ev}>\deg{f}$. 
Let $S \in \mathcal S(f,X)$. Let $e =e_S$
 and $c\in \mathcal C_f(S)$. 
 Let $q(t) \in \mathsf{Irrfac}(p_{S,f,c})$ and $r$ be the highest power of $q$ dividing $p_{S,f,c}$.
Let $w \in \Omega_{S,f,X,q}$. 
\begin{enumerate}
\item  Let $Q(t) \in  {\mc O}_v[t]$ be monic such that $\ovl{Q}(t) =q(t)$.
Consider $F_Q = F[t]/(Q(t))$,  $\alpha = t+ (Q(t))$ in $F_Q$ and $E_Q =E[\alpha]$.
There is a unique extension $w_Q$ of $w$ to $F_Q$ such that $w_Q(\alpha) =0$ and $\ovl{\alpha} = \ovl{cX^{e}}$ in $F_Qw_Q$. We have $w_Q F_Q = wF$. The polynomial $s^e- \alpha^{-1}c$ is irreducible over $F_Q$. 
\item Let $F' = F[\sqrt[e]{\alpha^{-1}c}]$, $\lambda \in F'$ such that $\lambda^{e} =\alpha^{-1}c$ and $E' =E_Q[\lambda]$. 
There is a unique extension $w'$ of $w$ to $F'$ such that $w'(\lambda X) =0$, $\ovl{\lambda X} = 1 $ in $F'w'$, and for the valuation $w'$, $F'w'=Fw$.
    \item
    The polynomial $p(t) = a_{i_0}^{-1}\lambda^{i_0}f(\lambda^{-1}t)$ is primitive in $\mc O_{w'}[t]$ and $\ovl{\lambda X}\in F'w'$ is  a root of $\ovl{p}(t)$ of multiplicity $r$.
    \item
    Set  $X_1  = \lambda X - 1$ and  $g(t) = f (\lambda^{-1}(t+1)) $. Then $\max(S_{ g,X_1}(w')) \leq r$.   
    \item Set $S_1 = S_{ g,X_1}(w')$. Suppose $|S_1|\geq 2$. Then either $\max ~\mathsf {Mult}(S_1,g,X_1)<r $, or $p_{S_1,g, c_{S_1}}$ is a purely inseparable polynomial of degree $r$. 

 \item For the set $$\Omega = \{w' \in \Omega_v(F') \mid w'|_F \in \Omega_{S, f, X,q}, w'(\lambda X) =0, ~\ovl{\lambda X} =1 \mbox{ in } F'w'\},$$
we have $| \Omega_{S, f, X,q} \cap \Omega_v^\ast(F)| = |\Omega \cap \Omega_v^\ast(F')|$. 
Furthermore, $$\Omega \cap \Omega_v^\ast(F') \subseteq  
\bigcup_{\substack{{S_1 \in \mathcal S(g,X_1)}\\S_1 
\subseteq \{0,1,\ldots,r\}}}\Omega_{S_1, g,X_1}.$$

\end{enumerate}
\end{lem}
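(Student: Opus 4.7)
The plan is to execute the two base-extension moves from Section~2 in sequence and then verify the multiplicity reduction by an explicit identity relating $\bar p$ and $p_{S,f,c}$. For part~(1), by \Cref{residue-root} the element $\overline{cX^e}\in Fw$ is algebraic over $Ev$ with minimal polynomial $q$, and the hypothesis $\car{Ev}=0$ or $\car{Ev}>\deg f$, combined with $\deg q\leq \deg p_{S,f,c}\leq \max S\leq \deg f$, shows that $\car{Ev}\nmid \deg q$. So \Cref{baseextension} applies and furnishes the unique extension $w_Q$ with $w_Q(\alpha)=0$, $\bar\alpha=\overline{cX^e}$, $F_Qw_Q=Fw$, and $w_QF_Q=wF$. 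The Fundamental Inequality forces $v_QE_Q=vE$, so the order of $w_Q(X)$ in $w_QF_Q/v_QE_Q$ is still $e$, and \Cref{baseextension3} applied over $F_Q/E_Q$ makes $t^e-\alpha^{-1}cX^e$ irreducible; since $X$ is a $w_Q$-unit in $F_Q$, we have $F_Q[\sqrt[e]{\alpha^{-1}c}]=F_Q[\sqrt[e]{\alpha^{-1}cX^e}]$ and hence $s^e-\alpha^{-1}c$ is itself irreducible over $F_Q$. Part~(2) is then a second application of \Cref{baseextension3} to $F_Q/E_Q$, yielding the unique $w'$ with the stated normalization and $F'w'=F_Qw_Q=Fw$.

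For part~(3), I write $p(t)=\sum_i b_it^i$ with $b_i=a_ia_{i_0}^{-1}\lambda^{i_0-i}$; since $b_{i_0}=1$ the polynomial is automatically primitive. A valuation bookkeeping (using $w'(\lambda)=-w(X)$ and $\lambda^e=\alpha^{-1}c$) shows that $w'(b_i)\geq 0$ with equality iff $i\in S$, and substituting $\lambda^{-e}=\alpha c^{-1}$ yields the key identity $\bar p(t)=t^{i_0}\, p_{S,f,c}(\bar\alpha\, t^e)$ in $F'w'[t]$. Since $\bar\alpha$ is a root of $p_{S,f,c}$ of multiplicity exactly $r$ and the substitution $t\mapsto \bar\alpha t^e$ has non-vanishing derivative at $t=1$ (because $\car{Ev}\nmid e$ and $\bar\alpha\neq 0$), $\overline{\lambda X}=1$ is a root of $\bar p$ of multiplicity exactly $r$. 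Part~(4) then follows by writing $g(t)=a_{i_0}\lambda^{-i_0}\,p(t+1)$: the Taylor coefficients $c_j=p^{(j)}(1)/j!$ of $p(t+1)$ satisfy $\bar c_j=0$ for $j<r$ and $\bar c_r\neq 0$, and $w'(X_1)>0$ gives $w'(c_jX_1^j)>w'(c_rX_1^r)$ for $j>r$, so $\max S_1\leq r$.

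Part~(5) reduces to a length count: by \Cref{residue-root}, $\deg p_{S_1,g,c_{S_1}}=(\max S_1-\min S_1)/e_{S_1}\leq \max S_1\leq r$, so every root has multiplicity at most $r$. If strict inequality holds we are in the first alternative; otherwise some root attains multiplicity exactly $r$, which forces $\deg p_{S_1,g,c_{S_1}}=r$ and hence $p_{S_1,g,c_{S_1}}=\mu(t-\beta)^r$ for some unit $\mu$ and some $\beta\in E'v'$ (the centre $\beta$ being recoverable from the ratio of the coefficients of $t^{r-1}$ and $t^r$, since $r$ is invertible in $E'v'$), which is the purely inseparable alternative in the paper's sense.

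Part~(6) is bookkeeping: the map $w\mapsto w'$ is a bijection by the uniqueness in parts~(1)--(2), and $F'w'=Fw$ makes non-ruledness transfer in both directions. For the containment, the substitution $X=\lambda^{-1}(X_1+1)$ gives $F'=E'(X_1)[\sqrt{g(X_1)}]$ with $g$ still square-free (the roots of $g$ being affine images of those of $f$), so $F'/E'$ is itself the function field of a hyperelliptic curve. Non-ruledness of $F'w'/Ev$ implies non-ruledness of $F'w'/E'v'$ because $E'v'/Ev$ is finite, so \Cref{minimum-attained-twice} applied to $F'/E'$ yields $|S_{g,X_1}(w')|\geq 2$, and part~(4) confines $S_1\subseteq\{0,\ldots,r\}$. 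The hardest step I expect is part~(3): establishing the identity $\bar p(t)=t^{i_0}p_{S,f,c}(\bar\alpha t^e)$ and then doing the chain-rule-style multiplicity computation at $t=1$ is the technical core, after which the rest of the lemma falls out of the Section~2 machinery.
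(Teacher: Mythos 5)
Your proposal is correct and follows essentially the same path as the paper: it applies \Cref{baseextension} and \Cref{baseextension3} in sequence for parts (1)--(2), derives the identity $\overline{p}(t)=t^{i_0}p_{S,f,c}(\overline{\alpha}\,t^{e})$ for parts (3)--(4), and deduces (5)--(6) from the degree bound $\deg p_{S_1,g,c_{S_1}}\leq\max S_1\leq r$ together with \Cref{minimum-attained-twice}. You make explicit a few details the paper leaves implicit (notably the factorization $(t^e-1)^r\cdot(\text{unit at }t=1)$ that justifies the multiplicity-$r$ claim at $t=1$, and the observation that $\beta$ lies in the residue field via $\beta=-(\text{coef of }t^{r-1})/(r\cdot\text{coef of }t^{r})$), but no new ideas are needed and the decomposition matches the paper's.
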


\begin{proof}
$(1)$ Considering $\beta=\ovl{cX^{e}} \in Fw$ in \Cref{baseextension}, 
we get that there is a unique extension $w_Q$ of $w$ to $F_Q$ with $w_Q(\alpha) =0$, $\ovl{\alpha} = \ovl{cX^{e}}$ in $F_Qw_Q$ and $F_Q w_Q = Fw$. Moreover $w_QF_Q =wF$.
Let $v_Q =w_Q|_{E_Q}$. Then by \Cref{baseextension}, $v_Q$ is the unique extension of $v$ to $E_Q$ and $v_QE_Q =vE$.
Since $e$ is the order of $w(X)$ in $wF/vE$, it is also the order of $w(X)$ in $w_QF_Q/v_QE_Q$. Now, the irreducibility of $s^e-\alpha^{-1}c$ over $F_Q$ follows from \Cref{baseextension3}. 

$(2)$ For the valuation $w_Q$ on $F_Q$, we have that $\ovl{cX^e} = \ovl{\alpha} \in E_Qw_Q$ and hence applying \Cref{baseextension3},  we get a unique extension $w'$ of $w_Q$ to $F'$ such that $w'(\lambda X) =0$, $\ovl{\lambda X} =1$ and 
$F'w' =F_Qw_Q =Fw$.

$(3), (4)$ Set $\theta = \lambda X$. We have $w'(X_1) =w'(\theta-1) >0$. 
Let $r \in \nat$ be the multiplicity of $\ovl{cX^{e}}$  in $p_{S,f,c}(t)$.
Note that $r \leq \deg( p_{S,f,c}(t))$.
Recall that $$p_{S,f,c}(t) = \sum_{j=0}^k \ovl{a_{i_j}a_{i_0}^{-1}c^{-n_j} } t^{n_j} \in Ev [t]$$ where $en_j = i_j -i_0$.
We have 
$$ p(t) = a_{i_0}^{-1}\lambda^{i_0}f(\lambda^{-1}t) = \sum_{i=0}^n a_ia_{i_0}^{-1}\lambda^{i_0-i}t^{i} = \sum_{i=0}^n b_it^i \in E'[t]$$
where $b_i = a_ia_{i_0}^{-1}\lambda^{i_0-i }$. Note that  $w'(b_i) =0$ for $i \in S$ and $w'(b_i) >0$ for $i \notin S$. Thus $p(t)$ is a primitive polynomial in $\mc O_{w'}[t]$ and its residue polynomial is
$$\ovl{p} (t) = \sum_{i=0}^{n} \ovl{b_i} t^{i}  = t^{i_0}\sum_{j=0}^k \ovl{b_{i_j}}t^{i_j-i_0} =t^{i_0}\sum_{j=0}^k \ovl{b_{i_j}}t^{en_j} =t^{i_0} p_{S,f,c_S} (\ovl{\alpha} t^e) \in E'w'[t]. $$
We conclude that $\ovl{\theta} =1$ is a root of $\ovl{p}(t)$ of multiplicity equal to $r$.  

Write $$\ovl{p}(t) = (t -1 )^r q_0(t). $$
where $q_0(t) \in E'w'[t]$ such that $q_0(1) \neq 0$. 
We have $a_{i_0}^{-1}\lambda^{i_0}g(t) = p(t+1)$. Write
$ a^{-1}_{i_0}\lambda^{i_0} g(t) = \sum_{i=0}^n c_it^i$, where $c_i \in \mc O_{w'}[t]$.
Then $$ \sum_{i=0}^n \ovl{c_i}t^i=  \ovl{a^{-1}_{i_0}\lambda^{i_0}g}(t)=  \ovl{p}(t+1) = t^r q_0(t+1) \in E'w'[t].$$
This shows that $\ovl{c_r} = q_0(1) \neq 0$, and hence $w'(c_r) =0$.
Since $ w'(X_1)>0$, for $i>r$ we have
$$w' (c_iX_1^i ) \geq w'(X_1^i) > w'(X_1^r ) = w'(c_rX_1^r).$$
This shows that $S_{g, X_1}(w') \subseteq\{0, 1, \ldots, r\}$.

(5) Let $|S_1|\geq 2$. 
By (4), $\mathsf {Mult}(S_1,g,X_1)\leq r$. Suppose $\max ~\mathsf {Mult}(S_1,g,X_1)=r$. 
Then $p_{S_1,g,c_{S_1}}$ must be a purely inseparable polynomial of degree $r$, and $S_1=\{0, 1, \ldots, r\}$. 

(6) First statement follows from $(2)$. 
Let $w'\in \Omega \cap \Omega^{\ast}_v(F')$. 
By $(4)$, we have that for $S_1=S_{g,X_1}(w')$, $S_1\subseteq \{0, 1, \ldots, r\}$. Since $F'w'/Ev$ is non-ruled, we have $|S_1|\geq 2$ by \Cref{minimum-attained-twice}. Hence $w'$ belongs to
$\displaystyle \bigcup_{\substack{{S_1 \in \mathcal S(g,X_1)}\\S_1 
\subseteq \{0, 1, \ldots,r\}}}\Omega_{S_1, g,X_1}.$
\end{proof}

\section{Non-ruled residue extensions}\label{6} 

Let $E$ be a field and $v$ be a valuation on $E$. 
Let $f(X) \in E[X]$ be a square-free polynomial. 
Consider the hyperelliptic curve $\mc H : Y^2 = f(X)$. Let $F$ be the function field of $\mc H$ over $E$, that is, $F =E(X)[\sqrt{f(X)}]$.  

In this section, we prove that there are only finitely many residually transcendental extensions of $v$ to the function field $F$ for which the corresponding residue field extensions are non-ruled. In the case of $\deg(f)\leq 3$, it is shown in \cite{Ohm}, \cite{BG21}, \cite{RRTEC}. Usually the term hyperelliptic curve is used when $\deg(f) \geq 5$. However, the statements and proofs in this section also hold for the case when $\deg (f)<5$. 

\assume Let $n =\deg (f(X))$. We will assume throughout that $\car(Ev)=0$ or $\car(Ev)>n$.

Write $f(X)= \displaystyle \sum^{n}_{i=0} a_i X^i$
where $a_0, a_1, \ldots, a_n \in E$, $a_n \in \mg E$. Since $f(X)$ is square-free, note that both $a_0$ and $a_1$ cannot be simultaneously zero.

Recall that for $k\geq 1$, $S = \{i_0, \ldots, i_k\} \subseteq \{0, 1, \dots, n\}$ with $i_0 < \cdots < i_k$ and for $n_j = \frac{i_j-i_0 }{e_S}$ where $1\leq j\leq k$, and $c \in \mathcal C_f(S)$, we have the associated polynomial 
$$p_{S, f, c} (t)= \sum_{j=0}^k \ovl{a_{i_j}a_{i_0}^{-1}c^{-n_j}}t^{n_j} \in Ev[t].$$

\begin{prop}\label{base-case}
    Let $\Omega \subseteq \Omega_v(F)$.  
   Assume that  for every $w \in \Omega$,  $S =S_{f,X}(w)$, and $c_S\in E$ with $w(c_SX^{e_S}) =0$, the polynomial $p_{S,f,c_S}(t)$ is separable.
     Then $\mathsf{Covert}_{f,X}(\Omega)\cap \Omega_v^{\ast}(F) = \emptyset$. Moreover, 
     $\Omega \cap \Omega_v^{\ast}(F)$ is finite. 
\end{prop}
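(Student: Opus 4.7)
The statement has two conclusions, that $\mathsf{Covert}_{f,X}(\Omega) \cap \Omega_v^{\ast}(F) = \emptyset$ and that $\Omega \cap \Omega_v^{\ast}(F)$ is finite. Once the first (emptiness of the covert non-ruled part) is proven, the second follows immediately: $\Omega \cap \Omega_v^{\ast}(F) = \mathsf{Overt}_{f,X}(\Omega) \cap \Omega_v^{\ast}(F) \subseteq \mathsf{Overt}_{f,X}(\Omega_v^{\ast}(F))$, and the latter has cardinality at most $n^2$ by \Cref{overt}. So the main task is the first assertion.

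I would prove it by contradiction. Fix $w \in \mathsf{Covert}_{f,X}(\Omega) \cap \Omega_v^{\ast}(F)$, set $S = S_{f,X}(w)$ and $c \in \mathcal C_f(S)$. By hypothesis $p_{S,f,c}(t)$ is separable; by \Cref{residue-root}, $\beta := \ovl{cX^{e_S}}$ is a simple root of $p_{S,f,c}$, and its minimal polynomial $q \in Ev[t]$ is an irreducible factor of $p_{S,f,c}$ of multiplicity $r = 1$. In particular $w \in \Omega_{S,f,X,q}$. Applying \Cref{separable} with this $r=1$ yields a finite extension $F'$ of $F$ together with a unique extension $w'$ of $w$ satisfying $F'w'=Fw$, $w'(\lambda X)=0$, and $\ovl{\lambda X}=1$; writing $X_1 = \lambda X - 1$ and $g(t) = f(\lambda^{-1}(t+1))$, we have $w'(X_1)>0$ and $\max S_{g,X_1}(w') \leq 1$. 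Since $F'w'=Fw$ is non-ruled over $Ev$ and ruledness is stable (in both directions) under finite algebraic base change, $F'w'$ is also non-ruled over $E'v'$, so \Cref{minimum-attained-twice} applied over $E'$ forces $|S_{g,X_1}(w')| \geq 2$. Combining, $S_1 := S_{g,X_1}(w') = \{0,1\}$, $e_{S_1}=1$, and $p_{S_1,g,c_{S_1}}(t)$ is linear, hence purely inseparable of degree $r=1$ as predicted by \Cref{separable}(5).

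Next I would apply \Cref{purelyinseparable} with $r=1$ to the reduced data $(g, X_1, S_1, q')$ over $E'$, where $q' = p_{S_1,g,c_{S_1}}$ is the sole irreducible factor. This produces an extension $F_Q''$ of $F'$ with $w'' \in \Omega_v(F_Q'')$ such that $F_Q''w''=F'w'=Fw$ by \Cref{purelyinseparable}(3). Let $X_2$, $g'$, and $S_2=S_{g',X_2}(w'')$ be as in \Cref{purelyinseparable}(4); that statement gives $\max S_2 \leq r = 1$, and if $|S_2|\geq 2$ then $\max \mathsf{Mult}(S_2,g',X_2) < r = 1$, which is impossible since root multiplicities are positive integers. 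Hence $|S_2|=1$, and \Cref{minimum-attained-twice} forces $F_Q''w''/E''v''$ to be ruled; by stability under base change, $Fw/Ev$ is ruled, contradicting $w \in \Omega_v^{\ast}(F)$ and proving emptiness.

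The principal obstacle is justifying the applicability of \Cref{purelyinseparable} at the reduced level: its hypothesis demands $w' \in \Omega_{S_1,g,X_1,q'}$, which, since $\deg p_{S_1,g,c_{S_1}}=1$, amounts to $w'$ being covert with respect to $(g, X_1)$, equivalently $\ovl{c_{S_1} X_1}$ being the unique root of $p_{S_1,g,c_{S_1}}$. To dispose of the overt alternative, one would write $g(X_1) = g(0) \cdot u$ with $u = g(X_1)/g(0) \in E'(X_1)$ and $g(0) \in \mg{E'}$, verify that $w'(u)=0$ and that $\ovl u$ is algebraic over $E'v'$ in this sub-case (using that $\ovl{c_{S_1} X_1} \in F'w'$ can be expressed through $\ovl u$ via the coefficients of $g$), and then invoke \Cref{u_algebraic_implies_ruled 0} to conclude $F'w'/E'v'$ is ruled — the desired contradiction. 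Careful handling of this overt sub-case, including the special situation $g(0)=0$ in which the condition $S_1 = \{0,1\}$ fails outright and yields ruledness at once, is the most subtle part of the argument.
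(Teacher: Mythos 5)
The paper proves the emptiness of $\mathsf{Covert}_{f,X}(\Omega)\cap\Omega_v^\ast(F)$ in one step: after the single base change of \Cref{separable}(1)--(3), it applies \Cref{ruled_applicable} directly to the minimal polynomial $p_\theta(t) = a_{i_0}^{-1}\lambda^{i_0}(h(t) - f(X))$ of $\theta=\lambda X$ over $E'(f(X))$. Covertness of $w$ with respect to $(f,X)$ forces $\ovl{p_\theta}=\ovl p$, separability of $p_{S,f,c}$ forces $\ovl\theta=1$ to be a simple root of $\ovl{p}$, and since $1$ is trivially algebraic over $Ev$, \Cref{ruled_applicable} yields ruledness without any further overt/covert dichotomy. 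Your proposal instead performs a second reduction to $(g,X_1)$ and splits on whether $w'$ is covert or overt with respect to $(g,X_1)$ — a route the paper never takes, and which is where the problem lies.

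The gap is precisely in the overt sub-case, which you flag as the ``principal obstacle'' but do not actually close. You set $u=g(X_1)/g(0)$, note $w'(u)=0$, and then claim that $\ovl u$ is algebraic over $E'v'$ ``using that $\ovl{c_{S_1}X_1}$ can be expressed through $\ovl u$.'' But the only thing that relates $\ovl{c_{S_1}X_1}$ to algebraic data is \Cref{residue-root}, and that lemma says $\ovl{c_{S_1}X_1}$ is a root of $p_{S_1,g,c_{S_1}}$ \emph{if and only if} $w'$ is covert with respect to $(g,X_1)$. In the overt sub-case this is precisely false: $\ovl{c_{S_1}X_1}\neq -1$, but nothing prevents it from being transcendental over $E'v'$ — indeed \Cref{overt} is built around exactly this situation, and there one only gets a finite bound on non-ruled extensions, not emptiness. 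So \Cref{u_algebraic_implies_ruled 0} cannot be invoked, and there is no contradiction in the overt sub-case. The covert sub-case of your argument (apply \Cref{purelyinseparable} with $r=1$, observe $|S_2|\geq 2$ is impossible, conclude ruled via \Cref{minimum-attained-twice}) is sound, but it is strictly more work than the paper's single application of \Cref{ruled_applicable}, and it leaves the overt sub-case unresolved. To repair the proposal you would need a genuinely different argument in the overt sub-case — in effect, something equivalent to what \Cref{ruled_applicable} already supplies.
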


\begin{proof}
Write $f(t) = a_0 + a_1 t + \ldots+a_nt^n$.
Let $w \in \mathsf{Covert}_{f,X}(\Omega)$. We claim that $Fw/Ev$ is ruled.
Let $S =S_{f,X}(w)$ and $c_S \in \mathcal C_f(S)$. Then by \Cref{residue-root}, we get that $\ovl{c_SX^{e_S}}$ in $Fw$ is a root of $p_{S,f,c_S}(t)$.  Let $q(t) \in \mathsf{irrfac}(p_{S,f,c_S})$ be the minimal polynomial of $\ovl{c_SX^{e_S}}$ over $Ev$. 
Let $Q(t) \in \mc O_v[t]$ be monic such that $\ovl{Q}(t)=q(t) \in Ev[t]$. Set $F_Q = F[t]/(Q(t))$,  $\alpha = t+ (Q(t))$ in $F_Q$ and $F' = F_Q[\sqrt[e_S]{\alpha^{-1}c_S}]$.
(as in \Cref{separable}). Let $\lambda \in F'$ be such that $\lambda^{e_s} = \alpha^{-1}c_S$. Set $E' = E[\lambda]$ and $\theta  = \lambda X$. 

Let $w'$ be an extension of $w$ to $F'$ as in \Cref{separable}$(2)$.
We have $w'( \theta) =0$.  
Consider
$$h(t) = f (\lambda^{-1}t) = \sum_{i=0}^n a_i\lambda^{-i}t^i \in E'[t].$$ We have that $h(\theta) =f(X)$ and $E'(X) = E' (f(X))[\theta] = E'(h(\theta)) [\theta] = E'(\theta)$.
Let $i_0 = \min S$.

Consider the polynomial
$p_\theta (t) = a_{i_0}^{-1}\lambda^{i_0}(h(t) - f(X))   \in E'(f(X))[t]$. Since $f(X) = h(\theta)$, we get that $p_{\theta}(\theta)=0.$ 
Note that $$\deg (p_\theta) =n= \deg_{\theta}(h(\theta))= [E'(\theta):  E'(h(\theta))].$$ Hence $p_\theta$ is irreducible. 

Let  $p(t) = a_{i_0}^{-1}\lambda^{i_0} h(t) = a_{i_0}^{-1}\lambda^{i_0} f(\lambda^{-1}t) $. Then by \Cref{separable}(3), we have that $p(t)$ is primitive. 
Since $w \in \mathsf{Covert}_{f,X}(\Omega)$, $w' (a_{i_0}^{-1}\lambda^{i_0}f(X)) > 0$, we get that $p_\theta(t) \in \mc O_{w'}[t]$ is primitive.
Observe that $\ovl{p_{\theta}}(t)=\ovl{p}(t)$. 
Since $p_{S,f,c}(t)$ is separable, $\ovl{\theta}$ is a simple root of $\ovl{p_{\theta}}$ by \Cref{separable}(3).
Now, it follows by \Cref{ruled_applicable} that $F'w' /E'w'$ is ruled. Since $F'w' =Fw$, we get that $Fw/Ev$ is also ruled. Thus $\Omega^\ast_v(F) \cap \mathsf{Covert}_{f,X}(\Omega)  = \emptyset$.

By \Cref{overt}, we get that 
$\Omega^\ast_v(F) \cap \mathsf{Overt}_{f,X}(\Omega) $ is finite. Since $\Omega = \mathsf{Overt}_{f,X}(\Omega) \cup  \mathsf{Covert}_{f,X}(\Omega)$, we get that $\Omega \cap \Omega_v^{\ast}(F)$ is finite.
\end{proof}

\begin{ex}(Elliptic Curves)
Let $E$ be a field with valuation $v$. Assume that $\car{Ev}\neq 2,3$. 
Let $a,b\in E$ be such that $ab\neq 0$. 
Let $F=E(X)[\sqrt{X^3+aX+b}]$.

Then for any $w\in \Omega^{\ast}_v(F)$, there are four possibilities for $S:=S_{f,X}(w)$, which we list below and analyze the set $\Omega_{S,f,X}$. Let $c\in E^{\times}$ be such that $w(cX^{e_S})=0$ for all $w \in \Omega_{S,f,X} $.

\textbf{Case 1} $S=\{0, 1\}$. Here $i_0=0,i_1=1$. Hence $d=1$ and $e_S=1$. 
Then $p_{S,f,c}(t)=1+\ovl{ab^{-1}c^{-1}}t \in Ev[t]$.

\textbf{Case 2} $S=\{0,3 \}$.  Here $i_0=0,i_1=3$. Hence $d=3$ and $e_S=1$ or $3$. 

If $e_S=1$, then $p_{S,f,c}(t)=1+\ovl{b^{-1}c^{-3}}t^3 \in Ev[t]$.

If $e_S=3$, then $p_{S,f,c}(t)=1+\ovl{b^{-1}c^{-1}}t \in Ev[t]$.

\textbf{Case 3} $S=\{1,3 \}$. Here $i_0=1,i_1=3$. Hence $d=2$ and $e_S=1$ or $2$. 

If $e_S=1$, then $p_{S,f,c}(t)=1+\ovl{a^{-1}c^{-2}}t^2$.

If $e_S=2$, then $p_{S,f,c}(t)=1+\ovl{a^{-1}c^{-1}}t$.

\textbf{Case 4} $S=\{0, 1, 3\}$. Here $i_0=1,i_1=1, i_2=3$. Hence $d=1$ and $e_S=1$. Then $p_{S,f,c}(t)=1+\ovl{ab^{-1}c^{-1}}t+\ovl{b^{-1}c^{-3}}t^3 \in Ev[t]$.

In Cases 1,2, 3 and Case 4 with $\ovl{a^3/b^2} \neq -27/4$, the polynomial $p_{S,f,c}(t)$ is separable over $Ev$, by \Cref{base-case}, 
$\Omega_{S,f,X} \cap \Omega^\ast_v(F)$ is finite.

In Case 4 with $\ovl{a^3/b^2} = -27/4$, we have that 
$$ \ovl{bc^3}p_{S,f,c}(t) = t ^3 + \ovl{ac^2} t+ \ovl{bc^3} = \left(t-\ovl{\hbox{$\frac{3bc}{a}$}}\right)\left(t+\ovl{\hbox{$\frac{3bc}{2a}$}}\right)^2  .$$
The polynomial $p_{S,f,c}(t)$ is not separable over $Ev$ and  $\max ~\mathsf {Mult}(S,f, X)=2$. 
Take $X_1 = \frac{2a}{3b}X +1$ and $$g(t) = f\left(-\frac{3b}{2a}(t+1)\right) =  \left(-\frac{3b}{2a}\right)^3 \left(t^3 + 3t^2 + \frac{\Delta_{a,b}}{9b^2}t +\frac{\Delta_{a,b}}{27b^2} \right).$$
Note that, $\ovl{a^3/b^2} \neq -27/4$ implies that $v(\frac{\Delta_{a,b}}{b^2}) >0 $. 
Let $w \in \Omega_{S,f,X}$. We  have $w(X_1)> 0$ and $v(3)=0$, hence $w(X_1^3)> w(3X_1^2) $, whereby $\max{S_{g,X_1}(w)} \leq 2 $. Thus $\mathcal S (g,X_1) \subseteq \{ 0,1,2\}$. Let $\Delta_{a,b}:=4a^3+27b^2$. Note that
$w(\frac{\Delta_{a,b}}{9b^2}X_1) > w(\frac{\Delta_{a,b}}{9b^2})$. Thus 
$\mathcal S (g,X_1)$ consists of only one set $S_1 :=\{ 0,2\}$. Since $v(2) =0$, for any $c_1\in \mathcal C_g(S_1)$ (e.g. one can take $c_1 = \frac{b^2}{\Delta_{a,b}}$), the polynomial $p_{S_1,g, c_1}$ is separable.
Thus $\mathsf{Covert}_{g,X_1}(\Omega_{S,f,X})\cap \Omega_v^{\ast}(F) = \emptyset$.
We get that
$$\Omega_{S,f,X}\cap \Omega_v^{\ast}(F) =(\mathsf{Overt}_{f,X}(\Omega_{S,f,X})\cap \Omega_v^{\ast}(F))\cup (\mathsf{Overt}_{g,X_1}(\Omega_{S,f,X})\cap \Omega_v^{\ast}(F)), $$
and this is finite, by \Cref{overt}.
Thus we see that $|\Omega^{\ast}_v(F)|<\infty .$

In \cite{RRTEC}, more detailed investigation was made for this case and it was proved that $|\Omega^{\ast}_v(F)|\leq 1$. 

\end{ex}

\begin{thm}
\label{main-theorem}
There are finitely many residually transcendental extensions $w$ of $v$ to $F$ such that $Fw/Ev$ is non-ruled.
\end{thm}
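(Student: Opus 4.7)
The plan is a strong induction on $r \geq 1$, proving the auxiliary statement $I(r)$: for every hyperelliptic function field $F = E(X)[\sqrt{f(X)}]$ with $\deg f \leq n$ over a valued field $(E, v)$ satisfying $\car{Ev} = 0$ or $\car{Ev} > n$, and for every subset $\Omega \subseteq \Omega_v(F)$ such that $\max~\mathsf{Mult}(S_{f,X}(w), f, X) \leq r$ for all $w \in \Omega$, the intersection $\Omega \cap \Omega^\ast_v(F)$ is finite. Because $\mathcal{S}(f, X)$ is a finite collection of subsets of $\{0, 1, \ldots, n\}$ and every element of $\mathsf{Mult}(S, f, X)$ is bounded above by $\deg p_{S,f,c} \leq n$, the integer $r_0 = \max\{\max~\mathsf{Mult}(S, f, X) : S \in \mathcal{S}(f, X)\}$ is well-defined and finite; applying $I(r_0)$ with $\Omega = \Omega_v(F)$ yields the theorem. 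The base case $I(1)$ is precisely \Cref{base-case}.

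For the inductive step, assume $I(r')$ for all $r' < r$ and fix $\Omega$ as in the hypothesis of $I(r)$. Decomposing $\Omega = \mathsf{Overt}_{f,X}(\Omega) \cup \mathsf{Covert}_{f,X}(\Omega)$, the overt part meets $\Omega^\ast_v(F)$ in a finite set by \Cref{overt}. The covert part splits as the finite union $\bigcup_{S \in \mathcal{S}(f,X)} \mathsf{Covert}_{f,X}(\Omega \cap \Omega_{S,f,X})$, so it suffices to show that each summand intersects $\Omega^\ast_v(F)$ in a finite set. For $S$ with $\max~\mathsf{Mult}(S, f, X) < r$ the induction hypothesis applied to $\Omega \cap \Omega_{S,f,X}$ disposes of the summand. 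For $S$ with $\max~\mathsf{Mult}(S, f, X) = r$, decompose further via \eqref{double-union} as $\mathsf{Covert}_{f,X}(\Omega_{S,f,X}) = \bigcup_{q \in \mathsf{Irrfac}(p_{S,f,c})} \Omega_{S,f,X,q}$, reducing the task to bounding each $\Omega_{S,f,X,q} \cap \Omega^\ast_v(F)$.

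Fix such a pair $(S, q)$ and apply \Cref{separable}. This produces a finite extension $E'/E$ carrying an extension $v'$ of $v$, a hyperelliptic function field $F' = FE' = E'(X_1)[\sqrt{g(X_1)}]$ of degree at most $n$ over $E'$, and a cardinality-preserving bijection between $\Omega_{S,f,X,q} \cap \Omega^\ast_v(F)$ and $\Omega_1 \cap \Omega^\ast_{v'}(F')$, where $\Omega_1 \subseteq \Omega_{v'}(F')$ is the prescribed subset of \Cref{separable}(6). Moreover $\Omega_1 \cap \Omega^\ast_{v'}(F') \subseteq \bigcup_{S_1 \in \mathcal{S}(g, X_1),\, S_1 \subseteq \{0, \ldots, r\}} \Omega_{S_1, g, X_1}$, a finite union. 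By \Cref{separable}(5), each $S_1$ appearing satisfies either $\max~\mathsf{Mult}(S_1, g, X_1) < r$, in which case $I(r-1)$ applied to $\Omega_1 \cap \Omega_{S_1, g, X_1}$ concludes, or else $p_{S_1, g, c_{S_1}}$ is purely inseparable of degree $r$. In the second sub-case \Cref{purelyinseparable} performs a further reduction to yet another hyperelliptic function field $F''$ of degree at most $n$, and \Cref{purelyinseparable}(4) guarantees that every $S_2 \in \mathcal{S}(h, X_2)$ arising in that reduction satisfies $\max~\mathsf{Mult}(S_2, h, X_2) < r$, so the induction hypothesis once more yields finiteness.

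The main obstacle is selecting the correct form of the induction statement: one cannot induct directly on the finiteness of $\Omega^\ast_v(F)$, because \Cref{separable} and \Cref{purelyinseparable} only match $\Omega_{S,f,X,q} \cap \Omega^\ast_v(F)$ with a specific subset $\Omega_1$ of $\Omega^\ast_{v'}(F')$ cut out by prescribed values and residues, not with the whole of $\Omega^\ast_{v'}(F')$. Once the induction is formulated for arbitrary subsets $\Omega$ subject to a multiplicity bound, the strict inequalities supplied by \Cref{separable}(5) and \Cref{purelyinseparable}(4) ensure that each branch of the argument drops the controlling multiplicity, so the recursion terminates in at most $r_0$ steps.
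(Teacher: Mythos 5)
Your proof is correct and takes essentially the same approach as the paper: induction on the maximal root multiplicity $\max~\mathsf{Mult}(S,f,X)$, with \Cref{base-case} as the base, \Cref{overt} disposing of the overt part, and \Cref{separable}/\Cref{purelyinseparable} providing the multiplicity-reducing field extensions. The only (harmless) difference is organizational: the paper first branches on whether $p_{S,f,c_S}$ is already purely inseparable of degree $r$ (Case (a)) before using \Cref{separable} (Case (b)), whereas you apply \Cref{separable} uniformly and fall back on \Cref{purelyinseparable} only when its output is again purely inseparable of degree $r$; your explicit formulation of the induction statement $I(r)$ for arbitrary subsets $\Omega$ with bounded multiplicity is precisely the formulation the paper uses implicitly in its stated induction hypothesis.
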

\begin{proof} 
We have $F =E(X)[\sqrt{f(X)}]$.
Let $\mathcal S (f, X)$ be the collection of all subsets $S \subseteq \{0, 1, \ldots, n\}$ such that  $|S| \geq 2$ and $\Omega_{S,f,X} \neq \emptyset$.
By \Cref{minimum-attained-twice}, we have that 
$$\displaystyle \Omega_v^\ast(F) = \bigcup _{S \in\mathcal S (f, X) }(\Omega_{S,f,X} \cap \Omega_v^\ast(F)).$$
For each $S \in \mathcal S (f, X) $, we show that $|\Omega_{S,f,X} \cap \Omega_v^\ast(F)| < \infty$.
 We show this by induction on $\max~\mathsf{Mult}(S, f,X)$.
Fix $S \in \mathcal S(f,X)$ and an element $c_S \in \mathcal C_f(S)$. 
We consider the associated polynomial $p_{S,f,c_S}(t)$ in $Ev[t]$. Recall that $\mathsf{Mult}(S, f,X)$ is the set of multiplicities of all roots of the polynomial $p_{S,f,c_S}$.
 
 Suppose $\max~\mathsf{Mult}(S, f,X) =1$. 
Then the polynomial $p_{S,f,c_S}$ is separable.
Since  for each $w \in \Omega_{S,f,X}$ we have $S_{f,X}(w) =S$  and $p_{S,f,c_S}$ is separable, $\Omega_{S, f, X} \cap \Omega^\ast_v(F)$ is finite,  by \Cref{base-case}. 

\textbf{Induction hypothesis:} Let $E'/E$ be a finite extension, and let $ F'$ be a regular hyperelliptic function field over $E'$. 
Let $ X' \in  F'\setminus E'$ and $g(t) \in E'[t]$ be such that $ F' = E'(X')[\sqrt{g(X')}]$. 
Let $\Omega \subseteq \Omega_v(F')$. We assume that, for $1< r\leq  \deg(g)$ and for every valuation $w \in \Omega$ and $S = S_{g, X'}(w)$, 
if $\max~\mathsf {Mult}(S,g, X') <r$, then $ \Omega \cap \Omega_v^\ast(F')$ is finite.

Assume now that $\max~\mathsf{Mult}(S,f,X) =r$.
Let $ \tilde r = \max S$. Note that $r \leq \deg(p_{S,f, c_S}) \leq \max S -\min S \leq \tilde r$.


\textbf{Case$(a)$}: [$r= \tilde r$]. In this case, $S = \{0, 1, \ldots, r\}$ and  $p_{S,f,c_S}$ is purely inseparable of degree $r$. Let $q$ be the irreducible factor of $p_{S,f,c_S}$.  
In \Cref{purelyinseparable}, we constructed 
a finite field extension
$E_Q/E$, a polynomial $g \in E_Q[t]$ and $X_1 \in E_Q[X] $ such that $F_Q =FE_Q = E_Q(X)[\sqrt{g(X_1)}]$ and a set of valuations $\Omega \subseteq \Omega_v(F_Q)$ such that $| \Omega_{S, f, X, q} \cap \Omega_v^\ast(F)| = |\Omega \cap \Omega_v^\ast(F_Q)|$. Furthermore,
 $$\displaystyle \Omega \cap \Omega_v^\ast(F_Q) \subseteq \bigcup_{\substack{{S_1 \in \mathcal S(g,X_1)}\\S_1 \subseteq \{0, 1, \ldots, r\}}}\Omega_{S_1, g,X_1}.$$
Also, for each $S_1\subseteq \{0, 1, \ldots, r\} $ and $S_1 \in \mathcal S(g,X_1)$, we have $\max ~\mathsf {Mult}(S_1,g,X_1) < r $, and
 $\Omega_{S_1, g,X_1} \cap \Omega_v^\ast(F_Q)$ is finite, by the induction hypothesis. Thus
$|\Omega \cap \Omega^\ast_v(F_Q)| < \infty $. We conclude that
\begin{align*}
|\Omega_{S, f, X,q} \cap \Omega_v^\ast(F)| & = |\Omega \cap \Omega_v^\ast(F_Q)|<\infty.
\end{align*}
 In this case, we have that $\mathsf{Covert}_{f,X}(\Omega_{S,f,X}) =  \Omega_{S,f,X,q}$. Thus $\mathsf{Covert}_{f,X}(\Omega_{S,f,X}) \cap \Omega_v^\ast (F_Q)$ is finite.  
Also, $\mathsf{Overt}_{f,X}(\Omega_{S,f,X}) \cap \Omega_v^\ast (F_Q)$ is finite by \Cref{overt}. 
Thus $|\Omega_{S,f,X} \cap \Omega_v^\ast (F_Q)| < \infty$.

\textbf{Case$(b)$}: [$r < \tilde r$]. In this case,
$p_{S,f,c_S}$ is not purely inseparable or $\deg (p_{S,f,c_S} ) < \tilde r$.
We have that $\mathsf{Overt}_{f,X}(\Omega_{S,f,X}) \cap \Omega_v^\ast (F_Q)$ is finite by \Cref{overt}.
We now show that  $\mathsf{Covert}_{f,X}(\Omega_{S,f,X}) \cap \Omega_v^\ast (F_Q)$ is finite.

For $q \in  \mathsf{Irrfac}(p_{S,f,c_S})$, we consider the set  
$$\Omega_{S, f, X,q} = \{ w \in \Omega_{S,f,X} \mid  q(\ovl{c_S X^{e_S}}) =0 \mbox{ in } Fw\}.$$
Recall that by \Cref{residue-root}, we have 
$$\mathsf{Covert}_{f,X}(\Omega_{S,f,X}) = \bigcup_{ q \in  \mathsf{Irrfac}(p_{S,f,c_S})} \Omega_{S,f,X,q}. $$

Since there are finitely many monic irreducible factors of $p_{S,f,c_S}$, it is enough to show that, for every  $q \in  \mathsf{Irrfac}(p_{S,f,c_S})$, the set $ \Omega_{S,f,X,q} \cap \Omega_v^\ast (F)$ is finite. 

Let $q \in \mathsf{Irrfac}(p_{S,f,c_S})$. Let $r_q\in \nat$ be the highest power of $q$ dividing $p_{S,f,c_S}$. Then $r_q\leq r$. 
In \Cref{separable}, we constructed finite field extensions $E'/E$, $F' =FE'$ and a set of valuations $\Omega \subseteq \Omega_v(F')$ such that
$| \Omega_{S,f,X,q} \cap \Omega_v^\ast (F)|=|\Omega\cap \Omega_v^\ast(F')|$.
Furthermore, there is a polynomial $g \in E'[t]$ and $X_1 \in F'$ such that $F' =E'(X_1)[\sqrt{g(X_1)}]$ and   $\displaystyle \Omega \subseteq \bigcup_{\substack{{S_1 \in \mathcal S(g,X_1)}\\S_1 \subseteq \{0, 1, \ldots, r\}}}\Omega_{S_1, g,X_1}$.


Let $S_1 \subseteq \{0, 1, \ldots, r\}$ such that $S_1 \in \mathcal S(g,X_1)$ and let $c_{S_1}\in \mathcal C_g(S_1)$. 
If $p_{S_1,g,c_{S_1}}$ is not purely inseparable of degree $r$, then $\max ~\mathsf {Mult}(S_1,g,X_1) < r$, and hence $\Omega_{S_1,g,X_1} \cap \Omega_v^\ast(F')$ is finite, by the induction hypothesis. 

If $p_{S_1,g, c_{S_1}}$ is purely inseparable of degree $r$, then $S_1 =\{0,1,\ldots,r\}$, and it follows from Case $(a)$ that $\Omega_{S_1,g, X_1}\cap \Omega_v^\ast(F')$ is finite.
\end{proof}

\medskip

\textbf{Acknowledgements}: The second-named author would like to thank the Department of Science and Technology of India  for the INSPIRE Faculty fellowship grant IFA23- MA 197 and IIT Indore for Young Faculty Seed Grant IITIIYFRSG 2024-25/Phase-V/04R. The second-named author also thanks Shiv Nadar University for  supporting a research visit during this work. 
 The authors are grateful to Karim J. Becher and C\'edric A\"{i}d for their insightful discussions and valuable feedback on this work.

\bibliographystyle{amsalpha}

\end{document}